\documentclass[12pt]{amsart}

\usepackage{latexsym}
\usepackage{psfrag}
\usepackage{amsmath}
\usepackage{amssymb}
\usepackage{epsfig}
\usepackage{amsfonts}
\usepackage{amscd}
\usepackage{mathrsfs}
\usepackage{graphicx}
\usepackage{enumerate}
\usepackage[autostyle=false, style=english]{csquotes}
\MakeOuterQuote{"}
\usepackage{ragged2e}
\usepackage[all]{xy}
\usepackage{mathtools}
\newlength\ubwidth

\usepackage[dvipsnames]{xcolor}

\usepackage{placeins}
\usepackage{tikz}
\usetikzlibrary{shapes, positioning}
\usetikzlibrary{matrix,shapes.geometric,calc,backgrounds}

\newtheorem{theorem}{Theorem}

\newtheorem{ex}[theorem]{Example}

\newtheorem{cor}[theorem]{Corollary}

\newtheorem{defn}[theorem]{Definition}

\usepackage{amsmath,amssymb,amsthm}
\usepackage[dvipsnames]{xcolor}
\usepackage[colorlinks=true,citecolor=RoyalBlue,linkcolor=red,breaklinks=true]{hyperref}

\usepackage{graphicx}
\usepackage{mathdots} 
\usepackage[margin=2.5cm]{geometry}
\usepackage{ytableau}

\newcommand{\partsize}{\mathrm{st}}

\begin{document}

\title[Part Sizes in Cylindric Partitions]
{Part Size Count in Cylindric Partitions and Bijections for Small Profiles}

\author[Kur\c{s}ung\"{o}z]{Ka\u{g}an Kur\c{s}ung\"{o}z}
\address{Ka\u{g}an Kur\c{s}ung\"{o}z, Faculty of Engineering and Natural Sciences,
Sabanc{\i} University, Tuzla, Istanbul 34956, Turkey}
\email{kursungoz@sabanciuniv.edu}

\author[\"{O}mr\"{u}uzun Seyrek]{Hal\.{ı}me \"{O}mr\"{u}uzun Seyrek}
\address{Hal\.{ı}me \"{O}mr\"{u}uzun Seyrek, Faculty of Applied Sciences, Biruni University, Zeytinburnu, Istanbul 34015, Turkey}
\email{hseyrek@biruni.edu.tr}

\subjclass[2010]{05A17, 05A15, 11P84}

\keywords{cylindric partition, partition generating fuction, $q$-series,
partition bijection}

\date{Sep 2026}

\begin{abstract}
We first study the part size statistic in cylindric partitions, 
listing some specializations of a crude but general formula.  
Then, we establish bijections between ordinary partitions 
and cylindric partitions of small profiles with additional restrictions.  
Lastly, we extend earlier formulas for generating functions of 
cylindric partitions into distinct parts 
and functional equations of unrestricted cylindric partition generating functions, 
involving the part size statistic.  
\end{abstract}

\maketitle

\section{Introduction and statement of results}
\label{secIntro}

Cylindric partitions were introduced
by Gessel and Krattenthaler~\cite{GesselKrattenthaler}.

\begin{defn}\label{def:cylin} Let $r$ and $\ell$ be positive integers.
Let $c=(c_1,c_2,\dots, c_r)$ be a composition, where $c_1+c_2+\dots+c_r=\ell$.
A \emph{cylindric partition with profile $c$} is a vector partition
$\Lambda = (\lambda^{(1)},\lambda^{(2)},\dots,\lambda^{(r)})$,
where each $\lambda^{(i)} = \lambda^{(i)}_1+\lambda^{(i)}_2 + \cdots +\lambda^{(i)}_{s_i}$ is a partition,
such that for all $i$ and $j$,
\[
\lambda^{(i)}_j\geq \lambda^{(i+1)}_{j+c_{i+1}} \quad \text{and} \quad \lambda^{(r)}_{j}\geq\lambda^{(1)}_{j+c_1}.
\]
The integers $r$ and $\ell$ are called
the \emph{rank}, and the \emph{level}, of the cylindric partition,
respectively.
\end{defn}

For example, the sequence
$\Lambda=((12, 6, 2),(10,6,2),(13,10,5,2))$
is a cylindric partition with profile $(1,0,2)$.
It has rank $r=3$ for the vector partition $\Lambda$ has three partitions in it,
and level $\ell = 3$ for the profile is a composition of three.
One can check that for all $j$, $\lambda^{(1)}_j\ge \lambda^{(2)}_{j}$, $\lambda^{(2)}_j\ge \lambda^{(3)}_{j+2}$
and $\lambda^{(3)}_j\ge \lambda^{(1)}_{j+1}$.
We can visualize the required inequalities by writing the partitions
in subsequent rows repeating the first row below the last one,
and shifting the rows below as much as necessary to the left.
Thus, the inequalities between parts
become the weakly decreasing of the parts to the right in each row,
and downward in each column.
\[
\begin{array}{ccc ccc ccc}
& & & 12 & 6 & 2 &\\
& & & 10 & 6 & 2 & \\
& 13 & 10 & 5 & 2 & \\
\textcolor{lightgray}{12} & \textcolor{lightgray}{6}
& \textcolor{lightgray}{2} & 
\end{array}
\]
The repeated first row is shown in gray.

The size $\vert \Lambda \vert$ of a cylindric partition
$\Lambda = (\lambda^{(1)},\lambda^{(2)},\dots,\lambda^{(r)})$
is defined to be the sum of all the parts in the partitions
$\lambda^{(1)},\lambda^{(2)},\dots,\lambda^{(r)}$.
The largest part of a cylindric partition $\Lambda$
is defined to be the maximum part among all the partitions in $\Lambda$,
and it is denoted by $\max(\Lambda)$.
The number of part sizes in a cylindric partition $\Lambda$ is
the count of distinct integers appearing in $\Lambda$,
and it is denoted by $\partsize(\Lambda)$, 
the initials are those of \emph{slice type}.  
For the cylindric partition $\Lambda$ given above,
$\vert \Lambda \vert = 68$, $\mathrm{max}(\Lambda) = 13$,
and $\partsize(\Lambda) = 6$.

The following generating function
\begin{align}
\nonumber
F_c(z,u,q)
:=\sum_{\Lambda\in \mathcal{P}_c}
z^{\max{(\Lambda)}} u^{\partsize(\Lambda)} q^{\vert \Lambda \vert}
\end{align}

is the generating function for cylindric partitions, where $\mathcal{P}_c$ denotes the set of all cylindric partitions with profile $c$.

In 2007, Borodin~\cite{Borodin}
showed that when one sets $u=z=1$ to this generating function, 
it turns out to be a very nice infinite product.

\begin{theorem}[Borodin, 2007]
\label{theorem-Borodin}
Let $r$ and $\ell$ be positive integers,
and let $c=(c_1,c_2,\dots,c_r)$ be a composition of $\ell$. Define $t:=r+\ell$
and $s(i,j) := c_i+c_{i+1}+\dots+ c_j$.
Then,
\begin{equation}
\label{BorodinProd}
F_c(1,1,q) = \frac{1}{(q^t;q^t)_\infty}
\prod_{i=1}^r \prod_{j=i}^r \prod_{m=1}^{c_i}
\frac{1}{(q^{m+j-i+s(i+1,j)};q^t)_\infty}
\prod_{i=2}^r \prod_{j=2}^i \prod_{m=1}^{c_i} \frac{1}{(q^{t-m+j-i-s(j,i-1)};q^t)_\infty}.
\end{equation}
\end{theorem}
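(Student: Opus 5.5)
We prove Theorem~\ref{theorem-Borodin} by encoding a cylindric partition of profile $c$ as a periodic sequence of interlacing partitions, and then evaluating the resulting weighted sum with a transfer-operator (vertex operator) argument in the spirit of Borodin's periodic Schur process.

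\textbf{Step 1: the encoding.} Read the rows of the shifted diagram along diagonals. Then $\Lambda$ becomes a cyclic sequence of ordinary partitions $\mu^0,\mu^1,\dots,\mu^{t}=\mu^0$ of length $t=r+\ell$. Each consecutive pair satisfies one of two relations:
\begin{itemize}
\item $\mu^{k}\prec\mu^{k+1}$ (interlacing, a horizontal strip), or
\item $\mu^{k}\succ\mu^{k+1}$.
\end{itemize}
The choice at each step is dictated by the profile. Each row $\lambda^{(i)}$ contributes one "down" step, and each unit of $c_i$ contributes an "up" step. This gives a $\pm$ word of length $t$ with $r$ steps of one type and $\ell$ of the other.

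\textbf{Step 2: the weight.} We need $|\Lambda|$ expressed through the diagonal slices. It equals $\sum_k|\mu^k|$, possibly after a fixed reindexing. We introduce a variable $x_k$ at each step and specialize all $x_k$ so that the total weight is $q^{|\Lambda|}$. Concretely, we use Schur-type weights $q^{|\mu^k|}$, realized with $s_{\mu/\nu}$ at one variable $q^{a_k}$. The exponents $a_k$ grow with position. This yields
\[
F_c(1,1,q)=\operatorname{Tr}\Bigl(\prod_{k=1}^{t}\Gamma_{\epsilon_k}(q^{a_k})\,q^{L_0}\Bigr),
\]
where $\Gamma_\pm$ are the half vertex operators on the fermionic Fock space, $L_0$ is the energy operator, and the trace runs over the charge-zero sector.

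\textbf{Step 3: evaluating the trace.} Commute all $\Gamma_+$ past all $\Gamma_-$ using
\[
\Gamma_+(x)\Gamma_-(y)=(1-xy)^{-1}\Gamma_-(y)\Gamma_+(x),
\]
and cycle them around the trace using $q^{L_0}\Gamma_\pm(x)=\Gamma_\pm(q^{\pm1}x)q^{L_0}$. After infinitely many cycles the operators become the identity. What remains is the product of scalars $\prod_{n\ge0}(1-x_ix_jq^{tn})^{-1}$ over pairs of an up step and a later down step, cyclically. The trace of $q^{L_0}$ itself contributes $1/(q^t;q^t)_\infty$.

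\textbf{Step 4: matching exponents.} Each pair (a unit $m$ of $c_i$, a row $j$) produces a factor $1/(q^{e};q^t)_\infty$. Here $e$ counts the steps strictly between the two positions in the cyclic word. Pairs with $j\ge i$ give $e=m+j-i+s(i+1,j)$. Pairs that wrap around the cylinder give $e=t-m+j-i-s(j,i-1)$. Together these reproduce the product in \eqref{BorodinProd}.

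The main obstacle is Step 2 together with the bookkeeping in Step 4. We must place the profile inequalities $\lambda^{(i)}_j\ge\lambda^{(i+1)}_{j+c_{i+1}}$ on diagonals so that each becomes a genuine interlacing relation. We must also choose the $a_k$ so that $\sum_k$ (weights) equals $|\Lambda|$ exactly, and then check the exponents case by case. To fix conventions, we will first test on rank $r=1$, where the result should reduce to the known plane-partition-like product, and then induct on the word.
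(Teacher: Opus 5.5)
The paper gives no proof of this theorem; it quotes it from Borodin. Your outline (diagonal slices forming a $t$-periodic interlacing sequence, then a trace of half vertex operators) is precisely Borodin's periodic Schur process argument, so the strategy is right. As written, however, Step~2 does not produce what Step~3 uses, and the conjugation rule in Step~3 has the wrong sign.

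\emph{The period weight.} With a single $q^{L_0}$ in the trace, take a constant sequence $\mu^k\equiv\mu$, which is a legitimate interlacing sequence. All the one-variable factors $q^{\pm a_k(|\mu^k|-|\mu^{k-1}|)}$ equal $1$, so the trace assigns it weight $q^{|\mu|}$, whereas its true weight is $q^{t|\mu|}$. Hence the period weight is forced to be $q^t$. The clean formulation is
\[
F_c(1,1,q)=\operatorname{Tr}\bigl(\Gamma_{\epsilon_1}(1)\,q^{L_0}\,\Gamma_{\epsilon_2}(1)\,q^{L_0}\cdots\Gamma_{\epsilon_t}(1)\,q^{L_0}\bigr),
\]
which is literally $\sum q^{\sum_k|\mu^k|}$ over cyclic interlacing sequences. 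Moving the $t$ copies of $q^{L_0}$ to the right gives $\operatorname{Tr}\bigl(\prod_k\Gamma_{\epsilon_k}(q^{a_k})\,q^{tL_0}\bigr)$, with $a_k=k-1$ at each $\Gamma_-$ and $a_k=-(k-1)$ at each $\Gamma_+$. So the exponents do not all grow with position. Only with $q^{tL_0}$ do the factors $\prod_{n\ge0}(1-q^{tn}x_ix_j)^{-1}$ arise, and the scalar term is $\operatorname{Tr}q^{tL_0}=1/(q^t;q^t)_\infty$. Since $\operatorname{Tr}q^{L_0}=1/(q;q)_\infty$, your claim that the trace of $q^{L_0}$ contributes $1/(q^t;q^t)_\infty$ is false as stated.

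\emph{The conjugation sign.} With your commutation relation, $\Gamma_+$ lowers degree, so the rule is $q^{L_0}\Gamma_\pm(x)=\Gamma_\pm(q^{\mp1}x)\,q^{L_0}$, not $q^{\pm1}$. This sign is what gives $\Gamma_+(x)\,q^{tL_0}=q^{tL_0}\,\Gamma_+(q^tx)$, so that cycling drives the argument to $0$ and the operator to the identity. With your sign the arguments grow and the limiting step fails.

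In Step~4, $e$ must be the forward cyclic difference of positions from the $\Gamma_+$ to the $\Gamma_-$, i.e.\ the number of steps strictly between plus one. Otherwise an adjacent pair gives the divergent factor $1/(1;q^t)_\infty$. Your rank-one test pins this down: for $c=(\ell)$ the condition $\lambda_j\ge\lambda_{j+\ell}$ is vacuous, so $F_{(\ell)}(1,1,q)=1/(q;q)_\infty$ (not a plane-partition-type product), and the $\ell$ pairs must give exponents $1,\dots,\ell$.

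Concretely, use the cyclic boundary word $U^{c_1}D\,U^{c_2}D\cdots U^{c_r}D$ and let $m$ index the letters of the block $U^{c_i}$ from the left. Pairs with row index $j\ge i$ give $m+j-i+s(i+1,j)$ after relabelling $m\mapsto c_i+1-m$. The wrapping pairs, with row index $j-1<i$, give $t-m+j-i-s(j,i-1)$. The number of $(U,D)$ pairs at forward distance $e$ equals the number at distance $t-e$, because $D$ is the complement of $U$ and autocorrelation is symmetric. So it does not matter which letter you send to $\Gamma_+$ or in which direction you read, and no induction on the word is needed.

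Finally, Step~1 must be argued rather than asserted, though it is routine:
\begin{enumerate}
\item Each row inequality and each profile inequality $\lambda^{(i)}_j\ge\lambda^{(i+1)}_{j+c_{i+1}}$ is a horizontal or vertical adjacency in the shifted picture, so it links two consecutive diagonals.
\item Conversely, each interlacing inequality is one such adjacency, and the direction of interlacing is fixed by whether the boundary step between the two diagonals is horizontal or vertical.
\item The period (down $r$ rows, left $\ell$ columns) shifts the diagonal index by $r+\ell$, which gives exactly $t$ slices and $|\Lambda|=\sum_k|\mu^k|$.
\end{enumerate}
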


The identity (\refeq{BorodinProd}) is a very strong tool
that gives product representations
of generating functions of cylindric partitions with a given profile explicitly.

Here and throughout,
we use the following standard $q$-Pochhammer symbols~\cite{GR}.
\begin{align}
\nonumber
(a; q)_n := \prod_{j = 1}^n (1 - aq^{j-1})
\quad \textrm{ and } \quad
(a; q)_\infty := \lim_{n \to \infty} (a; q)_n,
\end{align}
for any $n \in \mathbb{N}$, $a, q \in \mathbb{C}$,
and $\vert q \vert < 1$.
The last condition on $q$ will ensure that
all series and infinite products in this note converge absolutely~\cite{NT-Rama, GR}.

Cylindric partitions have been heavily studied in a multitude of aspects such as
decompositions~\cite{Borodin, Corteel-RR-RSK, CSV, 
Langer-I, Langer-II, Leeuwen, Tingley, Tingley-Correction},
various distinctness conditions~\cite{BU, K-OS, K-OS-2023},
evidently positive generating functions~\cite{ASW, CDU, CW, FFW,
KR-completeASW, KR-tight, Tsu, AU23, Warnaar-A2AG, Warnaar-BaileyTree},
applications of standard methods to restricted or unrestricted
cylindric partitions~\cite{Burcu, LU},
and connections to vertex operator algebras 
and their representations~\cite{ASW, CDU, KR-completeASW, KR-tight, 
Warnaar-A2AG, Warnaar-BaileyTree}.

In particular, in~\cite{K-OS} the authors considered horizontally slicing
cylindric partitions into skew diagrams
and distinguished some slices as \emph{pivot} slices
per the following definition.
The terminology concerning Young diagrams and skew diagrams is that of~\cite{Fulton}.

\begin{defn}[\cite{K-OS}]
\label{defPivot}
 Let $\beta_1 \backslash \beta_0$, $\beta_2 \backslash \beta_1$,
 \ldots, $\beta_s \backslash \beta_{s-1}$ be skew diagrams, for a positive integer $s$.
 Set $\beta_{s+1}$ be the partition obtained by adding 1 to each part of $\beta_s$.
 In other words, $\beta_j$'s are partitions for $j =$ 0, \ldots, $s+1$,
 and  the Young diagram of $\beta_j$ is contained in the Young diagram of $\beta_{j+1}$
 for $j = 0$, \ldots, $s$.
 Stack the Young diagrams of $\beta_0$, $\beta_1$, \ldots, $\beta_{s+1}$
 on top of each other such that the left and the top edges are aligned.
 For any $j = 1$, \ldots, $s$,
 if the rightmost column of $\beta_j \backslash \beta_{j-1}$
 is strictly to the right of the leftmost column of $\beta_{j+1} \backslash \beta_j$,
 then $\beta_j \backslash \beta_0$ is called a \emph{pivot}
 in the chain $\beta_0$, $\beta_1$, \ldots $\beta_s$.
\end{defn}

A detailed example is in Section \ref{secPrelim}.  
Slices are denoted by ${}_{\cdot}\Sigma$'s.
When emphasis is needed, the pivots are denoted by ${}_{\cdot}\Pi$'s.
The indices are written on the bottom left because of
necessity of excessive use of indices in cylindric partitions~\cite{GesselKrattenthaler}.  

We would like to draw attention to some of the findings. 
The first technical result of this paper is the formula below.  

\begin{theorem}
\label{thmCrudeGeneral}
For a chosen and fixed profile $c$,

\begin{align}
\label{eqCrudeGeneral}
& \sum_{\Lambda \in \mathcal{P}_c} q^{ \vert \Lambda \vert }
z^{\mathrm{max}(\Lambda)}
u^{ \partsize(\Lambda) }
= \frac{ \left( (1 - u)zq; q \right)_\infty }{ \left( zq; q \right)_\infty }
\left[ 1 + \sum_{m \geq 1} \sum_{ \substack{ \Pi_1 < \Pi_2 < \cdots < \Pi_m \\
		\textrm{a chain of } m \textrm{ pivots } } }
\prod_{j = 1}^m \frac{ u z q^{ \vert \Pi_j \vert } }{ 1 - (1 - u) z  q^{ \vert \Pi_j \vert } } \right], 
\end{align}
where the innermost sum on the right-hand side 
is over all possible chains of $m$ pivots.  
\end{theorem}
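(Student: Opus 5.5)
The plan is to prove \eqref{eqCrudeGeneral} by refining, slice by slice, the horizontal slicing of~\cite{K-OS}. Given $\Lambda \in \mathcal{P}_c$ with $\max(\Lambda) = M$, let ${}_{k}\Sigma$ be the set of cells of $\Lambda$ whose entries are $\geq k$, for $k = 1, \ldots, M$. This gives a weakly decreasing chain ${}_{1}\Sigma \supseteq {}_{2}\Sigma \supseteq \cdots \supseteq {}_{M}\Sigma \neq \emptyset$. From it we read off three statistics:
\begin{itemize}
\item $\vert \Lambda \vert = \sum_k \vert {}_{k}\Sigma \vert$;
\item $\max(\Lambda) = M$, the number of slices counted with multiplicity;
\item $\partsize(\Lambda)$, the number of indices $k$ with ${}_{k}\Sigma \neq {}_{k+1}\Sigma$ (taking ${}_{M+1}\Sigma = \emptyset$), since the integer $k$ occurs in $\Lambda$ exactly when this happens.
\end{itemize}
Equivalently, $\partsize(\Lambda)$ is the number of \emph{distinct} slices occurring in the chain. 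So each slice contributes a factor $z q^{\vert \Sigma \vert}$, and each distinct slice contributes one extra factor $u$.

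The structural input I would take from~\cite{K-OS} is the $u=1$ version of the statement. There the chain of slices is encoded bijectively by two pieces of data:
\begin{enumerate}
\item a (possibly empty) chain of pivots $\Pi_1 < \cdots < \Pi_m$, each used with multiplicity at least one;
\item for every size $n \geq 1$, one "free" (non-pivot) slice of size $n$, used with multiplicity $a_n \geq 0$.
\end{enumerate}
The rule is that whenever some $\Pi_j$ has $\vert \Pi_j \vert = n$, the pivot occupies the slot of size $n$ and replaces the free slice there. This is what produces
\[
F_c(z,1,q) = \frac{1}{(zq;q)_\infty}\Bigl[1 + \sum_{m\ge1}\sum_{\Pi_1<\cdots<\Pi_m} \prod_{j=1}^m zq^{\vert\Pi_j\vert}\Bigr],
\]
because the free factor $1/(1-zq^n)$ combines with $zq^{n}$ to give $zq^n/(1-zq^n)$ for a pivot slot. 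The first step is to recall this bijection and verify that it respects the multiplicities of slices, and not merely the multiset of sizes. In particular, distinct slots must give distinct slices, so that the number of distinct slices in $\Lambda$ equals the number of slots with positive multiplicity.

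With that in hand, the $u$-refinement is a slot-by-slot computation.
\begin{itemize}
\item A free slot of size $n$ with multiplicity $a \geq 0$ contributes $1$ if $a=0$ and $u (zq^n)^a$ if $a \geq 1$. Summing over $a$ gives
\[
1 + \frac{u zq^n}{1-zq^n} = \frac{1-(1-u)zq^n}{1-zq^n}.
\]
Multiplying over all $n \geq 1$ yields the prefactor $((1-u)zq;q)_\infty/(zq;q)_\infty$.
\item A pivot slot of size $p$ has forced multiplicity $a \geq 1$ and contributes
\[
\sum_{a\ge1} u (zq^p)^a = \frac{uzq^p}{1-zq^p} = \frac{1-(1-u)zq^p}{1-zq^p}\cdot \frac{uzq^p}{1-(1-u)zq^p}.
\]
\end{itemize}
In the second identity, the first factor is exactly the free-slot factor of size $p$ that the prefactor already includes. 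Pulling it out leaves the summand $\prod_j uzq^{\vert\Pi_j\vert}/(1-(1-u)zq^{\vert\Pi_j\vert})$. Summing over $m$ and over pivot chains then gives \eqref{eqCrudeGeneral}.

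The main obstacle is the first step: making precise the decomposition from~\cite{K-OS} at the level of actual slices rather than sizes. Concretely, I must check two things.
\begin{itemize}
\item Each size $n$ hosts at most one slice type, so a pivot of size $n$ genuinely replaces the free slice of size $n$ and is never an additional distinct slice of that size.
\item Consecutive equal slices in the chain correspond exactly to repeated use of one slot.
\end{itemize}
I would first try to establish this by tracking how $\beta_{j+1}$ is obtained from $\beta_j$ in Definition~\ref{defPivot}. Between consecutive pivots, the slices should be forced to be the "add one to every part" shifts, whose sizes increase by exactly one at each step, and this rigidity is what guarantees one slice per size. Once this is verified, the remaining computation is the routine one described above.
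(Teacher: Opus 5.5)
Your proposal is correct and follows essentially the paper's argument. The paper also takes the pivot decomposition of \cite{K-OS} as given. A free size contributes $(1-(1-u)zq^n)/(1-zq^n)$, and a pivot size has forced multiplicity at least one, contributing $uzq^{n}/(1-zq^{n})$; dividing out the free factor leaves $uzq^{\vert\Pi\vert}/(1-(1-u)zq^{\vert\Pi\vert})$.

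The step you flag as the main obstacle is immediate. Strictly nested slices have strictly increasing weights, and the path fixed by the pivots contains exactly one slice of each weight. So your \emph{add one to every part} heuristic is not needed; it is also inaccurate, since that operation changes the size by $r$, not by $1$.
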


The applications include 
\begin{align}
\label{genFuncProfile11}
  & \sum_{\Lambda \in \mathcal{P}_{(1,1)}} q^{ \vert \Lambda \vert }
  z^{\mathrm{max}(\Lambda)}
  u^{ \partsize(\Lambda) }
= \frac{ \left( (1 - u)zq^2; q^2 \right)_\infty  \left( (1 - 2u)zq; q^2 \right)_\infty }
	{ \left( zq; q \right)_\infty  }, 
\end{align}
and 
\begin{align}
\label{genFuncP21}
  & \sum_{\Lambda \in \mathcal{P}_{(2,1)}} q^{ \vert \Lambda \vert }
  z^{\mathrm{max}(\Lambda)}
  u^{ \partsize(\Lambda) }
  = 1 + \sum_{m \geq 1} \sum_{ 0 < j_1 < j_2 < \cdots < j_m } 
  f_{2+r_1} f_{2+r_2} \ldots f_{2+r_s}
  \prod_{k = 1}^m \frac{ u z q^{ j_k } }{ 1 - z  q^{ j_k } }, 
\end{align}
where $r_1$ is the length of the maximal run of consecutive numbers 
in the sequence $j_1 < j_2 < \cdots < j_m$ starting with $j_1$, 
$r_2$ is the length of the maximal run of consecutive numbers 
in the sequence $j_1 < j_2 < \cdots < j_m$ starting with $j_{r_1 + 1}$, etc., 
and $f_i$'s are the Fibonacci numbers.  

We also describe bijections between ordinary integer partitions 
and cylindric partitions of small profiles with additional constraints.  
One example is the following identity.  

\begin{theorem}\label{c=(1,1)bijection}
Let $\mathcal C$ denote the set of cylindric partitions
$\Lambda=(\lambda^{(1)},\lambda^{(2)})$ of profile $(1,1)$, written as
pairs
\[
P_k=[\lambda^{(2)}_k,\lambda^{(1)}_k]=[b_k,a_k],
\qquad
1\leq k\leq N,
\]
where
\[
N=\max\bigl\{\ell(\lambda^{(1)}),\ell(\lambda^{(2)})\bigr\},
\]
with zeros appended to the shorter component when necessary, such that
$a_k-b_k\in\{0,1\}$ for every $k$, and such that, writing $J(\Lambda)=\{k:a_k=b_k+1\}$,
$S_k=\#\{j>k:j\in J(\Lambda)\}$, and $r_k=b_k-S_k$, there exists an integer $t$ with $0\leq t\leq N$ such that
\[
r_1>r_2>\cdots>r_t>0
\]
and
\[
r_k=0,\qquad t<k\leq N,
\]
where the first condition is vacuous when $t=0$.

Let $\mathcal P$ denote the set of pairs $(\mu,\beta)$ such that every
positive part occurring in $\mu$ has multiplicity exactly two, and
$\beta$ is a partition into distinct odd parts.

Then there is a weight-preserving bijection $\Phi:\mathcal
C\to\mathcal P$, $|\Lambda|=|\mu|+|\beta|$.
\end{theorem}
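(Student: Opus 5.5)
The plan is to read off $(\mu,\beta)$ directly from the data $J(\Lambda)$ and $(r_k)$ in the statement. The key observation is a weight decomposition. For $\Lambda\in\mathcal C$ write $\varepsilon_k=a_k-b_k\in\{0,1\}$, so $\varepsilon_k=1$ exactly when $k\in J(\Lambda)$. Then
\[
|\Lambda|=\sum_{k=1}^N (a_k+b_k)=\sum_{k=1}^N 2b_k+|J(\Lambda)|=\sum_{k=1}^N 2r_k+\sum_{k=1}^N 2S_k+|J(\Lambda)|.
\]
Counting by $j\in J(\Lambda)$ instead of by $k$, each $j$ contributes to $S_k$ for exactly the $j-1$ indices $k<j$. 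Hence
\[
\sum_k 2S_k+|J(\Lambda)|=\sum_{j\in J(\Lambda)}(2j-1).
\]
I would therefore define $\Phi(\Lambda)=(\mu,\beta)$ as follows. The partition $\beta$ has the distinct odd parts $\{2j-1:j\in J(\Lambda)\}$. The partition $\mu$ has parts $r_1,r_1,r_2,r_2,\dots,r_t,r_t$. Since $r_1>\cdots>r_t>0$ by hypothesis, every positive part of $\mu$ has multiplicity exactly two, and the displayed computation gives $|\Lambda|=|\mu|+|\beta|$.

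For the inverse, take $(\mu,\beta)\in\mathcal P$. Let $r_1>\cdots>r_t>0$ be the distinct parts of $\mu$, and let $J=\{j:2j-1\in\beta\}$. Put $N=\max(t,\max J)$, with the convention $\max\emptyset=0$. Set $r_k=0$ for $t<k\le N$, $S_k=\#\{j>k:j\in J\}$, $b_k=r_k+S_k$ and $a_k=b_k+[k\in J]$. The main work is checking that the resulting $\Lambda$ lies in $\mathcal C$, in three steps.

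\emph{Cylindric inequalities.} For profile $(1,1)$ these say $\lambda^{(1)}_k\ge\lambda^{(2)}_{k+1}$ and $\lambda^{(2)}_k\ge\lambda^{(1)}_{k+1}$, together with each component being a partition. All of these follow from the chain $a_k\ge b_k\ge a_{k+1}\ge b_{k+1}$. The first inequality is immediate. For the middle one,
\[
b_k-a_{k+1}=(r_k+S_k)-(r_{k+1}+S_{k+1}+[k+1\in J])=r_k-r_{k+1}\ge 0,
\]
using $S_k=S_{k+1}+[k+1\in J]$ and the weak decrease of $(r_k)$ including the trailing zeros.

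\emph{Length.} The value $N$ defined here must match $\max\{\ell(\lambda^{(1)}),\ell(\lambda^{(2)})\}$, i.e. we need $a_N>0$. If $N\in J$ then $a_N\ge1$. Otherwise $N=t\ge1$, so $b_N=r_N>0$ because $S_N=0$.

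\emph{Remaining conditions.} The conditions $a_k-b_k\in\{0,1\}$, $J(\Lambda)=J$, and the prescribed shape of $(r_k)$ hold by construction.

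Conversely, for $\Lambda\in\mathcal C$ the construction recovers $\Lambda$. Indeed, $N$ is forced: $a_N>0$ means either $N\in J$, or $b_N=r_N>0$ and then $N=t$. No $j>N$ can lie in $J$, and $t\le N$. So $N=\max(t,\max J)$, and the two maps are mutually inverse. The only step needing care is the telescoping identity $b_k-a_{k+1}=r_k-r_{k+1}$, which is exactly what makes the strict-then-zero condition on the $r_k$ equivalent to the interlacing conditions.
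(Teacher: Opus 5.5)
Your proposal is correct. The map $\Phi$ you write down is the same as the paper's, and so is your inverse: the paper's Step~5 also sets $b_k=r_k+S_k$, $a_k=b_k+\varepsilon_k$, $N=\max(t,\max J)$, and checks the cylindric inequalities through $b_k-a_{k+1}=r_k-r_{k+1}$.

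The difference lies in how $\Phi$ is obtained and how weight preservation is proved. The paper works inside the slice/pivot machinery of \cite{K-OS}:
\begin{enumerate}
\item[(i)] in its longest step (Step~3), it shows that the pivots of $\Lambda$ are exactly the shape-$(2)$ slices ${}_{a_k}\Sigma$, $k\in J(\Lambda)$, each of weight $2k-1$;
\item[(ii)] it removes one copy of each pivot and observes that the remaining cylindric partition has pairs $[r_k,r_k]$, so its slice weights form $\nu=(r_1,r_1,\ldots,r_t,r_t)'$;
\item[(iii)] it gets $|\Lambda|=|\nu|+\sum_{k\in J(\Lambda)}(2k-1)$ from the weight statement of Theorem~4 of \cite{K-OS}.
\end{enumerate}
You replace all of this with the double count $\sum_k S_k=\sum_{j\in J(\Lambda)}(j-1)$, which yields $|\Lambda|=2\sum_k r_k+\sum_{j\in J(\Lambda)}(2j-1)$ in one line.

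Your route is self-contained and much shorter, and it depends neither on the pivot identification nor on the external theorem. You are also more explicit than the paper in checking that $\max(t,\max J)$ recovers the original $N$, which is what makes $\Psi\circ\Phi=\mathrm{id}$ airtight.

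What the paper's route buys is interpretation. It exhibits $\Phi$ as the restriction of the general pivot decomposition of \cite{K-OS} to this family: $\beta$ records the pivot weights, and $\mu$ is the conjugate of the partition of non-pivot slice weights. It also explains the otherwise ad hoc hypothesis on $(r_k)$: once the pivots are removed, both rows must equal one and the same strict partition.
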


This paper is organized as follows.  
Section \ref{secPrelim} reviews the necessary definitions and terminology.  
Section \ref{secGeneral} proves Theorem \ref{thmCrudeGeneral} and some corollaries.  
Section \ref{secBijections} contains Theorem \ref{c=(1,1)bijection}, its proof, 
and its companions sorted from easy to difficult.  
Section \ref{secDist} is the updating of Section 4 in~\cite{K-OS} 
incorporating the new statistic part size count.
Section \ref{secFuncEqs} is a construction of a system of functional equations 
defining $F_c(z,u,q)$ for arbitrary but fixed profile $c$.  
Section \ref{secFuncEqs} is also an updating of (and a correction in) 
the functional equation (24) in~\cite{K-OS}.  

\section{Preliminaries}
\label{secPrelim}

This is the preliminaries section in~\cite{K-OS}
less proofs,
which in turn is mostly a reinterpretation
of the definitions in~\cite{GesselKrattenthaler}.

A visualization of cylindric partitions is obtained by
replacing each number in the cylindric partition by a vertical stack
consisting of that many unit cubes~\cite{Warnaar-A2AG},
then aligning and shifting as imposed by the profile.
The cylindric partition 	$\Lambda=((12, 6, 2),(10,6,2),(13,10,5,2))$
with profile $(1,0,2)$ will be shown as in Figure \ref{figStackOfCubes}.
The repeated first row is shown as faded.
\begin{figure}
\centering
\includegraphics[scale=0.3]{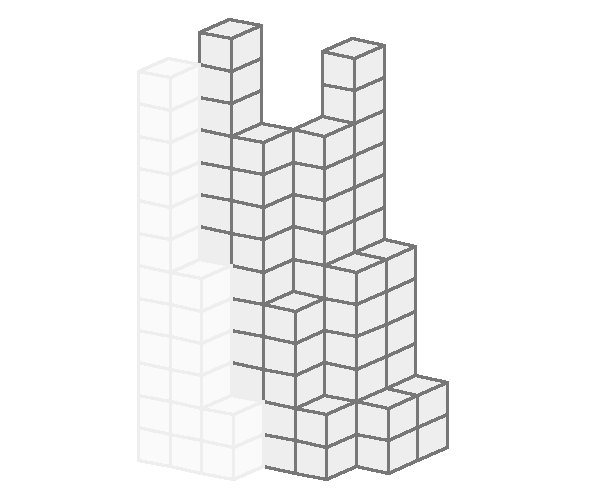}
\caption{ A visualization of a cylindric partition, the repeated first row is shaded. }
\label{figStackOfCubes}
\end{figure}

Given two cylindric partitions $\Lambda$ and $M$ having the same profile $c$,
we say $\Lambda \geq M$ if
$\lambda^{(i)}_j \geq \mu^{(i)}_j$ for all pairs $(i, j)$.
The missing entries may be filled in with zeros, as necessary to make comparisons.
This gives a partial ordering on cylindric partitions,
reminiscent of the partial order of integer partitions
by containment of the Young diagrams~\cite{TheBlueBook}.

It is also convenient to define the empty cylindric partition of a profile $c$
as an analog of the empty partition of zero in ordinary partitions.
For any fixed profile $c$,
the \emph{empty cylindric partition} $E$
is the unique partition satisfying $\Lambda \geq E$
for all cylindric partitions $\Lambda$ with profile $c$.

One defines $\Lambda+M$ as the componentwise addition,
taking the missing entries as zeros.
$\Lambda+M$ is another cylindric partition with the same profile,
and $\mathrm{max}(\Lambda+M) \leq \mathrm{max}(\Lambda) + \mathrm{max}(M)$.

Given any cylindric partition $\Lambda$,
it is uniquely possible to horizontally cut $\Lambda$ into \emph{slices}
${}_1\Sigma$, ${}_2\Sigma$, \ldots, ${}_m\Sigma$ such that
\begin{align}
\label{ineqSliceChain}
E \leq {}_1\Sigma \leq {}_2\Sigma \leq \cdots \leq {}_m\Sigma,
\end{align}
and
\begin{align}
\nonumber
\Lambda = {}_1\Sigma + {}_2\Sigma + \cdots + {}_m\Sigma,
\end{align}

The slices of our running example are as follows.  
\begin{center}
\raisebox{7mm}{\includegraphics[scale=0.3]{ex0.png} }
\raisebox{2cm}{$ \rightarrow $}
\includegraphics[scale=0.4]{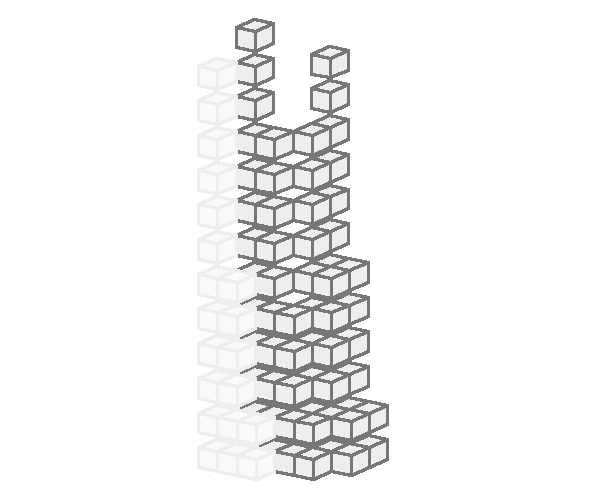} 
\end{center}
\begin{align*}
\vcenter{ \hbox{ \resizebox{2\width}{2\height}{
\ytableausetup{boxsize=0.2em}
\ydiagram[*(white)]{ 3+3, 3+3, 1+4 } 
*[*(darkgray)]{0,0,0} } } }
= \hspace{-8mm}
\vcenter{ \hbox{ \resizebox{2\width}{2\height}{
\ytableausetup{boxsize=0.2em}
\ydiagram[*(white)]{ 3+3, 3+3, 1+4 } 
*[*(darkgray)]{0,0,0} } } }
> \hspace{-8mm}
\vcenter{ \hbox{ \resizebox{2\width}{2\height}{
\ytableausetup{boxsize=0.2em}
\ydiagram[*(white)]{ 3+2, 3+2, 1+3 } 
*[*(darkgray)]{0,0,0} } } }
= \hspace{-8mm} 
\vcenter{ \hbox{ \resizebox{2\width}{2\height}{
\ytableausetup{boxsize=0.2em}
\ydiagram[*(white)]{ 3+2, 3+2, 1+3 } 
*[*(darkgray)]{0,0,0} } } }
= \hspace{-8mm} 
\vcenter{ \hbox{ \resizebox{2\width}{2\height}{
\ytableausetup{boxsize=0.2em}
\ydiagram[*(white)]{ 3+2, 3+2, 1+3 } 
*[*(darkgray)]{0,0,0} } } }
> \hspace{-8mm}
\vcenter{ \hbox{ \resizebox{2\width}{2\height}{
\ytableausetup{boxsize=0.2em}
\ydiagram[*(white)]{ 3+2, 3+2, 1+2 } 
*[*(darkgray)]{0,0,0} } } }
> \hspace{-8mm}
\vcenter{ \hbox{ \resizebox{2\width}{2\height}{
\ytableausetup{boxsize=0.2em}
\ydiagram[*(white)]{ 3+1, 3+1, 1+2 } 
*[*(darkgray)]{0,0,0} } } }
= \hspace{-8mm} 
\vcenter{ \hbox{ \resizebox{2\width}{2\height}{
\ytableausetup{boxsize=0.2em}
\ydiagram[*(white)]{ 3+1, 3+1, 1+2 } 
*[*(darkgray)]{0,0,0} } } }
= \hspace{-8mm} 
\vcenter{ \hbox{ \resizebox{2\width}{2\height}{
\ytableausetup{boxsize=0.2em}
\ydiagram[*(white)]{ 3+1, 3+1, 1+2 } 
*[*(darkgray)]{0,0,0} } } }
= \hspace{-8mm} 
\vcenter{ \hbox{ \resizebox{2\width}{2\height}{
\ytableausetup{boxsize=0.2em}
\ydiagram[*(white)]{ 3+1, 3+1, 1+2 } 
*[*(darkgray)]{0,0,0} } } } 
\end{align*}
\begin{align*}
> \hspace{-5mm} 
\vcenter{ \hbox{ \resizebox{2\width}{2\height}{
\ytableausetup{boxsize=0.2em}
\ydiagram[*(white)]{ 3+1, 3+0, 1+1 } 
*[*(darkgray)]{3,3,1} } } }
= \hspace{-5mm} 
\vcenter{ \hbox{ \resizebox{2\width}{2\height}{
\ytableausetup{boxsize=0.2em}
\ydiagram[*(white)]{ 3+1, 3+0, 1+1 } 
*[*(darkgray)]{3,3,1} } } }
> \hspace{-5mm} 
\vcenter{ \hbox{ \resizebox{2\width}{2\height}{
\ytableausetup{boxsize=0.2em}
\ydiagram[*(white)]{ 3+0, 3+0, 1+1 } 
*[*(darkgray)]{3,3,1} } } }
>  \hspace{-5mm} 
\vcenter{ \hbox{ \resizebox{2\width}{2\height}{
\ytableausetup{boxsize=0.2em}
\ydiagram[*(white)]{ 3+0, 3+0, 1+0 } 
*[*(darkgray)]{3,3,1} } } }
= E
\end{align*}
Slices may be indexed from smallest to largest, 
or vice-versa.  
The choice will be clear from context.  

Slices may be regarded eiher as skew diagrams
with the same partition cut out from inside each,
or as cylindric partitions of the same profile
having only 1's as parts.
For convenience, we draw slices as skew diagrams.
For emphasis on the profile or for reference 
when there are one or more zero rows in $\Lambda$
we indicate the cutout partition on the left
with dark gray boxes.
The dark grey boxes do not contribute to the weight.

It is clear that $\mathrm{max}(\Lambda)$ is the number of nonempty slices
in the chain \eqref{ineqSliceChain}.
In particular, if $E < {}_1\Sigma$ in \eqref{ineqSliceChain},
then $\mathrm{max}(\Lambda) = m$.

It is also possible to keep track of $\partsize(\Lambda)$
using \eqref{ineqSliceChain}.
Remove the empty slices and the duplicates in \eqref{ineqSliceChain}
to obtain
\begin{align}
\label{ineqDistSliceChain}
E < {}_{j_1}\Sigma < {}_{j_2}\Sigma < \cdots < {}_{j_h}\Sigma.
\end{align}
Then, $\partsize(\Lambda) = h$.

In the running example, the distinct slices are as follows.  	
\begin{center}
\includegraphics[scale=0.3]{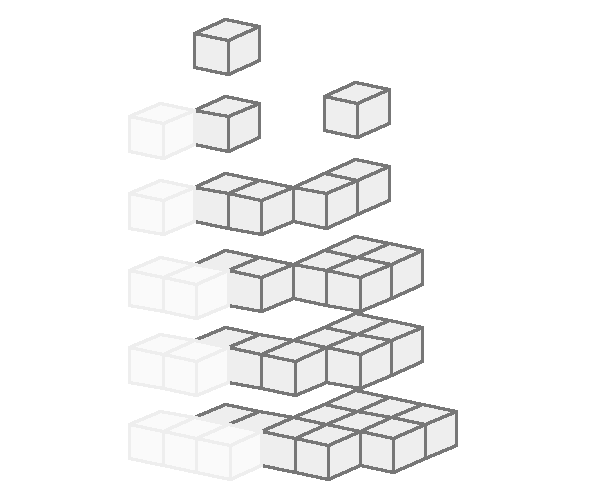} 
\end{center}
\begin{align*}
\vcenter{ \hbox{ \resizebox{2\width}{2\height}{
\ytableausetup{boxsize=0.2em}
\ydiagram[*(white)]{ 3+3, 3+3, 1+4 } 
*[*(darkgray)]{0,0,0} } } }
> \hspace{-8mm}
\vcenter{ \hbox{ \resizebox{2\width}{2\height}{
\ytableausetup{boxsize=0.2em}
\ydiagram[*(white)]{ 3+2, 3+2, 1+3 } 
*[*(darkgray)]{0,0,0} } } }
> \hspace{-8mm}
\vcenter{ \hbox{ \resizebox{2\width}{2\height}{
\ytableausetup{boxsize=0.2em}
\ydiagram[*(white)]{ 3+2, 3+2, 1+2 } 
*[*(darkgray)]{0,0,0} } } }
> \hspace{-8mm}
\vcenter{ \hbox{ \resizebox{2\width}{2\height}{
\ytableausetup{boxsize=0.2em}
\ydiagram[*(white)]{ 3+1, 3+1, 1+2 } 
*[*(darkgray)]{0,0,0} } } }
> \hspace{-5mm} 
\vcenter{ \hbox{ \resizebox{2\width}{2\height}{
\ytableausetup{boxsize=0.2em}
\ydiagram[*(white)]{ 3+1, 3+0, 1+1 } 
*[*(darkgray)]{3,3,1} } } }
> \hspace{-5mm} 
\vcenter{ \hbox{ \resizebox{2\width}{2\height}{
\ytableausetup{boxsize=0.2em}
\ydiagram[*(white)]{ 3+0, 3+0, 1+1 } 
*[*(darkgray)]{3,3,1} } } }
>  \hspace{-5mm} 
\vcenter{ \hbox{ \resizebox{2\width}{2\height}{
\ytableausetup{boxsize=0.2em}
\ydiagram[*(white)]{ 3+0, 3+0, 1+0 } 
*[*(darkgray)]{3,3,1} } } }
= E
\end{align*}

To see this inductively, start with ${}_{j_1}\Sigma$, which brings 1's only.
Then, each ${}_{j_i}\Sigma$ adds 1 to the existing parts,
and introduces one or more 1's in the intermediate cylindric partition,
incrementing the number of distinct parts from $i-1$ to $i$.
When any of the slices at hand repeats any number of times,
all existing parts greater than or equal to
some lower bound are increased by the same amount.
The parts strictly less than the same bound are intact.
This process neither can decrease nor increase the part size count.

%
%

Now, let's find the pivots in the running example.  
Using the distinct slices, 
we have the following partitions: 
\begin{align*}
\beta_0 = \hspace{-7mm}
\vcenter{ \hbox{ \resizebox{2\width}{2\height}{
\ytableausetup{boxsize=0.2em}
\ydiagram[*(white)]{ 0+3, 0+3, 0+1 } 
*[*(darkgray)]{0,0,0} } } }, \;
\beta_1 = \hspace{-7mm}
\vcenter{ \hbox{ \resizebox{2\width}{2\height}{
\ytableausetup{boxsize=0.2em}
\ydiagram[*(white)]{ 0+3, 0+3, 0+2 } 
*[*(darkgray)]{0,0,0} } } }, \;
\beta_2 = \hspace{-7mm}
\vcenter{ \hbox{ \resizebox{2\width}{2\height}{
\ytableausetup{boxsize=0.2em}
\ydiagram[*(white)]{ 0+4, 0+3, 0+2 } 
*[*(darkgray)]{0,0,0} } } }, \;
\beta_3 = \hspace{-7mm}
\vcenter{ \hbox{ \resizebox{2\width}{2\height}{
\ytableausetup{boxsize=0.2em}
\ydiagram[*(white)]{ 0+4, 0+4, 0+3 } 
*[*(darkgray)]{0,0,0} } } }, \; 
\beta_4 = \hspace{-7mm}
\vcenter{ \hbox{ \resizebox{2\width}{2\height}{
\ytableausetup{boxsize=0.2em}
\ydiagram[*(white)]{ 0+5, 0+5, 0+3 } 
*[*(darkgray)]{0,0,0} } } }, \;
\end{align*}
\begin{align*}
\beta_5 = \hspace{-7mm}
\vcenter{ \hbox{ \resizebox{2\width}{2\height}{
\ytableausetup{boxsize=0.2em}
\ydiagram[*(white)]{ 0+5, 0+5, 0+4 } 
*[*(darkgray)]{0,0,0} } } }, 
\beta_6 = \hspace{-7mm}
\vcenter{ \hbox{ \resizebox{2\width}{2\height}{
\ytableausetup{boxsize=0.2em}
\ydiagram[*(white)]{ 0+6, 0+6, 0+5 } 
*[*(darkgray)]{0,0,0} } } }, 
\beta_7 = \hspace{-7mm}
\vcenter{ \hbox{ \resizebox{2\width}{2\height}{
\ytableausetup{boxsize=0.2em}
\ydiagram[*(white)]{ 0+7, 0+7, 0+6 } 
*[*(darkgray)]{0,0,0} } } }.  
\end{align*}
They define the following skew diagrams.  
\begin{align*}
\beta_1 \backslash \beta_0 = \hspace{-7mm}
\vcenter{ \hbox{ \resizebox{2\width}{2\height}{
\ytableausetup{boxsize=0.2em}
\ydiagram[*(white)]{ 2+0, 2+0, 0+1 } 
*[*(darkgray)]{0,0,0} } } }, \;
\beta_2 \backslash \beta_1 = \hspace{-7mm}
\vcenter{ \hbox{ \resizebox{2\width}{2\height}{
\ytableausetup{boxsize=0.2em}
\ydiagram[*(white)]{ 1+1, 1+0, 0+0 } 
*[*(darkgray)]{0,0,0} } } }, \;
\beta_3 \backslash \beta_2 = \hspace{-7mm}
\vcenter{ \hbox{ \resizebox{2\width}{2\height}{
\ytableausetup{boxsize=0.2em}
\ydiagram[*(white)]{ 2+0, 1+1, 0+1 } 
*[*(darkgray)]{0,0,0} } } }, \; 
\beta_4 \backslash \beta_3 = \hspace{-7mm}
\vcenter{ \hbox{ \resizebox{2\width}{2\height}{
\ytableausetup{boxsize=0.2em}
\ydiagram[*(white)]{ 1+1, 1+1, 0+0 } 
*[*(darkgray)]{0,0,0} } } }, \;
\end{align*}
\begin{align*}
\beta_5 \backslash \beta_4 = \hspace{-7mm}
\vcenter{ \hbox{ \resizebox{2\width}{2\height}{
\ytableausetup{boxsize=0.2em}
\ydiagram[*(white)]{ 2+0, 2+0, 0+1 } 
*[*(darkgray)]{0,0,0} } } }, \;
\beta_6 \backslash \beta_5 = \hspace{-7mm}
\vcenter{ \hbox{ \resizebox{2\width}{2\height}{
\ytableausetup{boxsize=0.2em}
\ydiagram[*(white)]{ 1+1, 1+1, 0+1 } 
*[*(darkgray)]{0,0,0} } } }, 
\beta_7 \backslash \beta_6 = \hspace{-7mm}
\vcenter{ \hbox{ \resizebox{2\width}{2\height}{
\ytableausetup{boxsize=0.2em}
\ydiagram[*(white)]{ 1+1, 1+1, 0+1 } 
*[*(darkgray)]{0,0,0} } } } 
\end{align*}

Now we draw $\beta_{j-1} \backslash \beta_j$ 
and $\beta_{j} \backslash \beta_{j+1}$ together, 
for $j = $ 1, 2, 3, 4, 5, 6.  
The boxes that belong to $\beta_{j-1} \backslash \beta_j$ are drawn in red 
in each instance.  
\begin{align*}
{\color{red} \beta_1 \backslash \beta_0} 
\textrm{ and } \beta_2 \backslash \beta_1
= \hspace{-7mm} 
\vcenter{ \hbox{ \resizebox{2\width}{2\height}{
\ytableausetup{boxsize=0.2em}{
\begin{ytableau}
\none & \none & *(white)\\ 
\none & \none \\ 
*(red) 
\end{ytableau}
} } } }, \;
{\color{red} \beta_2 \backslash \beta_1} 
\textrm{ and } \beta_3 \backslash \beta_2
= \hspace{-7mm}   
\vcenter{ \hbox{ \resizebox{2\width}{2\height}{
\ytableausetup{boxsize=0.2em}{
\begin{ytableau}
\none & *(red) \\ 
\none & *(white) \\ 
*(white) 
\end{ytableau}
} } } }, \;
{\color{red} \beta_3 \backslash \beta_2} 
\textrm{ and } \beta_4 \backslash \beta_3
= \hspace{-7mm}   
\vcenter{ \hbox{ \resizebox{2\width}{2\height}{
\ytableausetup{boxsize=0.2em}{
\begin{ytableau}
\none & \none & *(white) \\ 
\none & *(red) & *(white) \\ 
*(red) 
\end{ytableau}
} } } }, 
\end{align*}
\begin{align*}
{\color{red} \beta_4 \backslash \beta_3} 
\textrm{ and } \beta_5 \backslash \beta_4
= \hspace{-7mm}   
\vcenter{ \hbox{ \resizebox{2\width}{2\height}{
\ytableausetup{boxsize=0.2em}{
\begin{ytableau}
\none & *(red) \\ 
\none & *(red) \\ 
*(white)
\end{ytableau}
} } } }, \; 
{\color{red} \beta_5 \backslash \beta_4} 
\textrm{ and } \beta_6 \backslash \beta_5
= \hspace{-7mm}   
\vcenter{ \hbox{ \resizebox{2\width}{2\height}{
\ytableausetup{boxsize=0.2em}{
\begin{ytableau}
\none & \none & *(white) \\ 
\none & \none & *(white) \\ 
*(red) & *(white)
\end{ytableau}
} } } }, \; 
{\color{red} \beta_6 \backslash \beta_5} 
\textrm{ and } \beta_7 \backslash \beta_6
= \hspace{-7mm}   
\vcenter{ \hbox{ \resizebox{2\width}{2\height}{
\ytableausetup{boxsize=0.2em}{
\begin{ytableau}
\none & *(red) & *(white) \\ 
\none & *(red) & *(white) \\ 
*(red) & *(white)
\end{ytableau}
} } } }
\end{align*}
So, $\beta_2 \backslash \beta_0$ and $\beta_4 \backslash \beta_0$ 
are the pivot slices in our chain.  

Take a slice $\Sigma$ with profile $c$
as a skew diagram $\lambda \backslash \mu$ ,
where both $\lambda$ and $\mu$ are partitions with exactly $r$ parts, allowing zeros.
Delete the last part from $\lambda$ and subtract it from all other parts of $\lambda$.
The obtained partition with exactly $(r-1)$ parts, allowing zeros,
is called the \emph{shape} of $\Sigma$.
The shape of the empty cylindric partition $E$ of the profile $c$
is called \emph{the shape of zero}.

The profile can be thought of governing the ``left ends'' of slices,
and the shapes their ``right ends''.
The convention has been representing profiles by compositions (called $c$, $d$, etc.).
We represent the shapes by partitions (called $\sigma$, $\tau$, etc.). 

It is not difficult to see that there can be at most one slice
having a given shape and weight.
For instance, the slices with weight 2
in profile $c=(1, 0, 2)$ can only have shapes $(0, 0)$, $(2, 1)$, and $(3, 3)$;
but not, say, $(1, 1)$.
In fact, we can use this observation to represent slices in a third way.
Namely, as $\textrm{weight}^{\textrm{shape}}$.
In this case, the existence of the said slice is tacitly assumed.
The distinct slices in the chain above are 
\begin{align*}
1^{( 1, 1)} < 2^{( 2, 1)} < 4^{( 1, 1)} < 6^{( 2, 2)} < 7^{( 1, 1)} < 10^{( 1, 1)}, 
\end{align*}
with $2^{( 2, 1)}$ and $6^{( 2, 2)}$ being the pivots in the chain.  
The empty slice for the profile $c=(1,0,2)$ is $ 0^{(2,2)}$.


We recall once more that the profile $c$ is fixed.
We order the slices by inclusion, and draw their
Hasse diagram~\cite{Birkhoff-Lattices}
sideways in such a way that slices which have the same weight are vertically aligned,
and slices having the same shape are horizontally aligned.
Moreover, the shapes are lexicographically ordered.
For convenience, we include the shape of zero.
This carries the profile information, as well.
For the profile $c = (1,0,2)$, the beginning of this Hasse diagram
is shown in Figure \ref{figHasse}.
\begin{figure}
\centering
\includegraphics[scale=0.1]{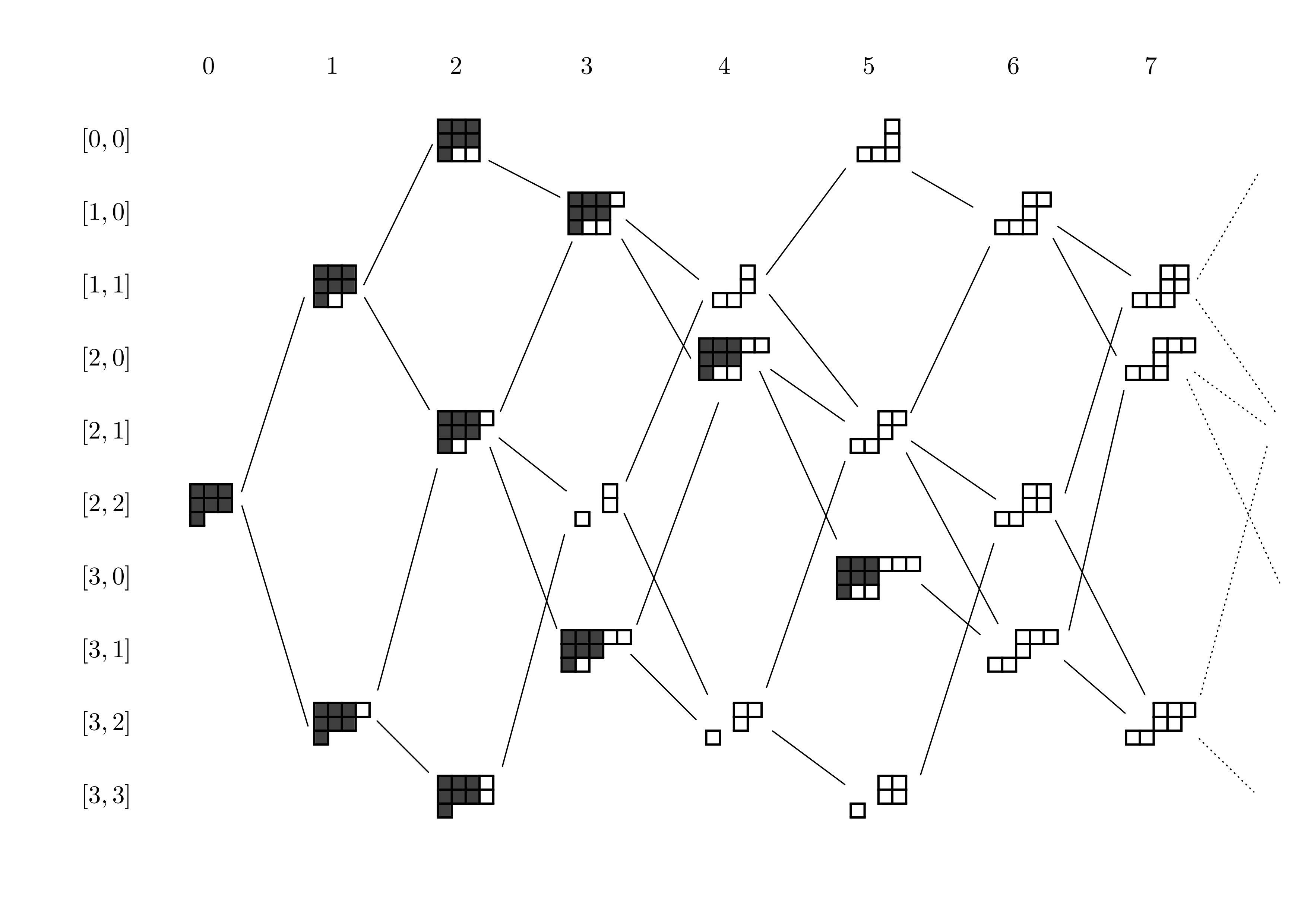}
  \caption{The Hasse diagram corresponding to the profile $c=(1,0,2)$. }
\label{figHasse}
\end{figure}

We represent a given cylindric partition in this diagram
with the multiplicities of its slices.
We replace the slices with nodes,
indicate the weights on the horizontal direction,
and the shapes on the vertical direction.
The last example will look like Figure \ref{figCylPtnInPathDiag}.
This is called the \emph{path diagram} associated with the profile $c$.  
The slice count 2 associated with $10^{( 1, 1)}$ is not visible in the figure.  
\begin{figure}
\centering
\includegraphics[scale=0.08]{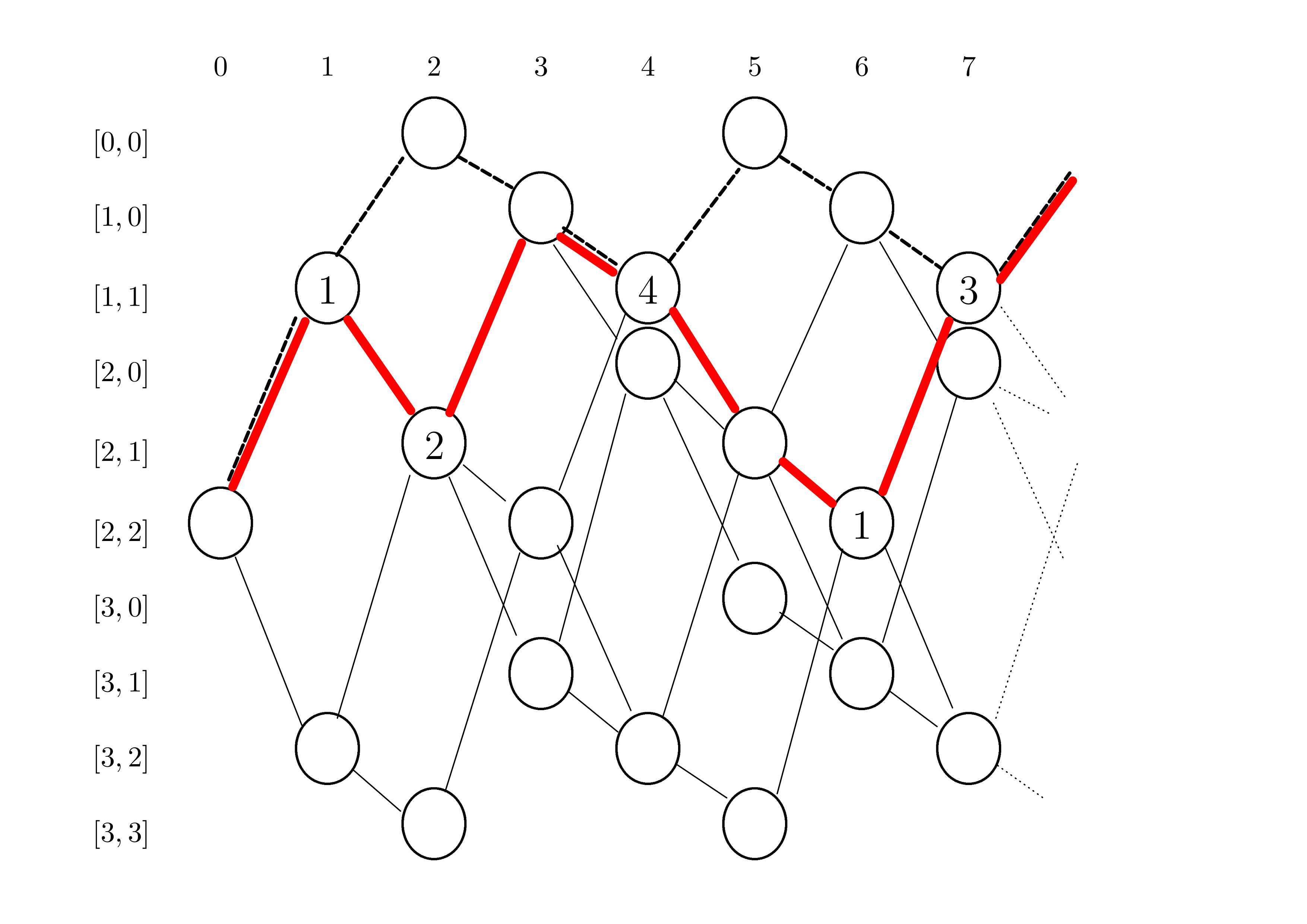}
  \caption{The path diagram corresponding to the profile $c=(1,0,2)$
    and a cylindric partition partially shown on it. }
\label{figCylPtnInPathDiag}
\end{figure}

In Figure \ref{figCylPtnInPathDiag}, 
the path between $2^{(2,1)}$ and $4^{(1,1)}$ 
goes through $3^{(1,0)}$ rather than $3^{(2,2)}$.  
This is because the path between two existing slices 
with successive weights is determined as follows.  
We obtain the bigger slice from the smaller one by adding one box at a time, 
and the order we add boxes are uniquely determined 
by tiling the rightmost column first and proceeding to the left, 
and tiling each column from the top down.  
The unique path determined by the running cylindric partition example is shown in red.  

The path starting with the empty slice 
and determined by tiling the half infinite slice to the right of it 
is called the \emph{default path}.  
The default path is shown by dashed lines in Figure \ref{figCylPtnInPathDiag}.  
Pivots govern the deviation from the default path.  
They can be thought of as providing us with necessary and sufficient data 
for navigation in a path diagram.  

Finally, consider two compositions with the same number of summands 
$c=(c_1, c_2, \ldots, c_r)$ and $d=(d_1, d_2, \ldots, d_r)$.  
Define $\Delta(d, c)$ to be the smallest non-negative $n$ such that 
the slice $n^d$ is present on the path diagram 
corresponding to the profile $c$.  

\section{A crude but general formula with some specializations}
\label{secGeneral}

\begin{proof}[Proof of Theorem \ref{thmCrudeGeneral}]
When there are no pivots,
there is a unique slice of size $n$
for any chosen and fixed positive integer $n$.
It can be found on the default path on the path diagram.
Each used slice contributes $+1$ to the maximum part,
but only the first used slice of any given and fixed size contributes to the
number of distinct slices, or to the number of part sizes in the cylindric partition.
Thus, the generating function will be
\begin{align}
\nonumber
1 + uzq^n + uz^2q^{2n} + \cdots
= 1 + u \sum_{i \geq 0} \left( zq^n \right)^i
= 1 + \frac{ uzq^n }{ 1 - zq^n}
= \frac{ 1 - (1 - u)zq^n }{ 1 - zq^n }
\end{align}
for the chosen size $n$ of the slice.
Since the presence or absence of slices of different sizes are independent,
we take the product over $n \geq 1$,
and obtain the quotient of the infinite products on the right hand side of \eqref{eqCrudeGeneral}.

On the other hand, each pivot will introduce
a new slice of a fixed size, and update the path.
Pivots, being slices themselves, contribute $+1$ to the maximum part.

However, we need a connection to the distinct slice count.
The pivot $\Pi$ itself will be thought of as contributing to the distinct slice count,
but the subsequent instances of the pivot slice
will only contribute to the maximum part.
Therefore, if we set $n = \vert \Pi \vert$,
the relevant factor in the generating function must be
\begin{align}
\label{eqCorrectFactor}
u z q^n + u z^2 q^{2n} + \cdots = \frac{ u z q^n }{ 1 - z q^n }
\end{align}
instead of
\begin{align}
\label{eqWrongFactor}
1 + u z q^n + u z^2 q^{2n} + \cdots = \frac{ 1 - (1 - u) z q^n }{ 1 - z q^n }.
\end{align}
Since \eqref{eqWrongFactor} is already a factor in the generating function,
we need to clear it by dividing.
Hence, the factor due to the pivot $\Pi$ will be
\begin{align*}
\nonumber
\frac{ \frac{ u z q^n }{ 1 - z q^n }  }{ \frac{ 1 - (1 - u) z q^n }{ 1 - z q^n }  }
= \frac{ u z q^{\vert \Pi \vert} }{ 1 - (1 - u) z q^{\vert \Pi \vert} }.
\end{align*}
The placement of a new pivot will alter the path on the path diagram,
but the contribution to the maximum part and the distinct slice count
due to slices of other sizes will not change.


The independence of the discussion for distinct pivots yields the formula.
\end{proof}

We list the application of Theorem \ref{thmCrudeGeneral}
to small profiles.
In profiles $c = (1,1)$ and $c=(2,0)$,
the pivots must have shape $(2)$.
In the former profile, they have odd weight, and in the latter even.
In both profiles,
the choice of pivots can be done on the path diagram.
One starts with an ordinary partition embedded in the path diagram
as a cylindric partition with no pivots.
Then, zero or more existing slices with shape $(0)$
are given shapes $(2)$.
For these profiles, the choices are independent.
This combinatorial process is also explained in ~\cite{K-OS}.
Therefore,
\begin{align*}
& \sum_{\Lambda \in \mathcal{P}_{(1,1)}} q^{ \vert \Lambda \vert }
z^{\mathrm{max}(\Lambda)}
u^{ \partsize(\Lambda) }
= \frac{ \left( (1 - u)zq; q \right)_\infty }{ \left( zq; q \right)_\infty }
\left[ 1 + \sum_{m \geq 1} \sum_{ \substack{ \Pi_1 < \Pi_2 < \cdots < \Pi_m \\
		\textrm{a chain of pivots} \\ \textrm{each with odd weight} } }
\prod_{j = 1}^m \frac{ u z q^{ \vert \Pi_j \vert } }{ 1 - (1 - u) z  q^{ \vert \Pi_j \vert } } \right]
\end{align*}
\begin{align*}
= \frac{ \left( (1 - u)zq; q \right)_\infty }{ \left( zq; q \right)_\infty }
\left[ 1 + \sum_{m \geq 1} \sum_{ \substack{ j_1< j_2 < \cdots < j_m \\
		\textrm{distinct odd numbers}  } }
\prod_{j = 1}^m \frac{ u z q^{ \vert \Pi_j \vert } }{ 1 - (1 - u) z  q^{ \vert \Pi_j \vert } } \right]
\end{align*}
\begin{align*}
= \frac{ \left( (1 - u)zq; q \right)_\infty }{ \left( zq; q \right)_\infty }
\prod_{m \geq 1} \left( 1 +  \frac{ u z q^{ 2m-1 } }{ 1 - (1 - u) z  q^{ 2m-1 } } \right)
= \frac{ \left( (1 - u)zq; q \right)_\infty  \left( (1 - 2u)zq; q^2 \right)_\infty }
	{ \left( zq; q \right)_\infty \left( (1 - u)zq; q^2 \right)_\infty }
\end{align*}
\begin{align*}
= \frac{ \left( (1 - u)zq^2; q^2 \right)_\infty  \left( (1 - 2u)zq; q^2 \right)_\infty }
	{ \left( zq; q \right)_\infty  }.
\end{align*}
This is \eqref{genFuncProfile11}.  
Similarly,
\begin{align}
\label{genFuncProfile20}
& \sum_{\Lambda \in \mathcal{P}_{(2,0)}} q^{ \vert \Lambda \vert }
z^{\mathrm{max}(\Lambda)}
u^{ \partsize(\Lambda) }
= \frac{ \left( (1 - u)zq; q^2 \right)_\infty  \left( (1 - 2u)zq^2; q^2 \right)_\infty }
	{ \left( zq; q \right)_\infty  }.
\end{align}

For general profiles, we can imitate the procedure in the above paragraph
keeping Theorem 4 in~\cite{K-OS} in mind.
In order to create a cylindric partition,
we start with an ordinary partition, and list the part sizes
\begin{align*}
 j_1 < j_2 < \cdots < j_m.
\end{align*}
Then, we can designate zero or more part sizes as appropriate pivots in
the path diagram corresponding to the chosen profile $c$.
After all, pivots uniquely determine a path in the path diagram.
Let's say that there are $a_c(j_1, j_2, \ldots, j_m)$ ways to choose the pivots.
The proof of Theorem \ref{thmCrudeGeneral} stipulates that
\begin{align}
 \label{eqWaysToChoosePivots}
& \sum_{\Lambda \in \mathcal{P}_{c}} q^{ \vert \Lambda \vert }
z^{\mathrm{max}(\Lambda)}
u^{ \partsize(\Lambda) }
= 1 + \sum_{m \geq 1} \sum_{ 0 < j_1 < j_2 < \cdots < j_m } a_c(j_1, j_2, \ldots, j_m)
\prod_{k = 1}^m \frac{ u z q^{ j_k } }{ 1 - z  q^{ j_k } }
\end{align}
Then, the problem transforms to calculating $a_c(j_1, j_2, \ldots, j_m)$
for a chosen profile.
It is not difficult to see that
\begin{align*}
	a_{(1,1)}(j_1, j_2, \ldots, j_m) = 2^{ \textrm{ the odd number count in } j_1, j_2, \ldots, j_m },
\end{align*}
and
\begin{align*}
	a_{(2,0)}(j_1, j_2, \ldots, j_m) = 2^{ \textrm{ the even number count in } j_1, j_2, \ldots, j_m },
\end{align*}
restoring generating functions \eqref{genFuncProfile11} and \eqref{genFuncProfile20},
respectively. 

Pivots in the profile $c=(2,1)$ are in 1-1 correspondence with 
partitions into distinct and non-consecutive parts.  
These are also known as Rogers-Ramanujan partitions~\cite{TheBlueBook}. 
Thus, $a_{(2,1)}(j_1, j_2, \ldots, j_m)$ 
is the number of ways to form a Rogers-Ramanujan partition 
using parts from $\left\{ j_1, j_2, \ldots, j_m \right\}$.  
We need to work on maximal \emph{run}s such as $n, n+1, n+2, \ldots, n+r-1$ 
in the sequence $j_1 < j_2 < \cdots < j_m$.  
By maximal we mean that $n-1 \not\in \left\{ j_1, j_2, \ldots, j_m \right\}$ , 
and $n+r \not\in \left\{ j_1, j_2, \ldots, j_m \right\}$.  
Contribution to a Rogers-Ramanujan partition 
in two disjoint maximal runs are independent.  
For a maximal run of $r$ numbers $n, n+1, n+2, \ldots, n+r-1$, 
call $g_r$ the number of ways to choose a Rogers-Ramanujan partition.  
$g_1 = 2$ since the possible partitions are the empty partition $\varepsilon$ and $n$.  
$g_2 = 3$ since the possible partitions are
the empty partition $\varepsilon$, $n$ and $n+1$.  
For $r > 2$, we see that $g_r = g_{r-1} + g_{r-2}$.  
The right hand side counts the partitions classified according to the existence 
of $n$ in the partition.  When $n$ is absent, 
we can use the remaining $r-1$ consecutive parts.  
When $n$ is present, we cannot use $n+1$, but use the remaining $r-2$ parts.  
$g_r$'s are the shifted Fibonacci numbers.  
Thus, 
\begin{align*}
& \sum_{\Lambda \in \mathcal{P}_{(2,1)}} q^{ \vert \Lambda \vert }
z^{\mathrm{max}(\Lambda)}
u^{ \partsize(\Lambda) }
= 1 + \sum_{m \geq 1} \sum_{ 0 < j_1 < j_2 < \cdots < j_m } g_{r_1} g_{r_2} \ldots g_{r_s}
\prod_{k = 1}^m \frac{ u z q^{ j_k } }{ 1 - z  q^{ j_k } }, 
\end{align*}
where the sequence $j_1 < j_2 < \ldots < j_m$ consists of exactly $s$ maximal runs 
with lengths $r_1$, $r_2$, \ldots, $r_s$ in their respective order; 
and 
\begin{align*}
  g_1 = 2, \quad g_2 = 3, 
  \quad \textrm{ and } \quad 
  g_r = g_{r-1} + g_{r-2} \textrm{ for } r > 2.  
\end{align*}
This proves \eqref{genFuncP21}.  
For instance, 
\begin{align*}
  a_{(2,1)}(2,3,4,5, 8, 10,11,12, 17, 20,21) 
  = g_4 \; g_1 \; g_3 \; g_1 \; g_2 
  = 8 \cdot 2 \cdot 5 \cdot 2 \cdot 3 
  = 480.  
\end{align*}

\section{Bijections between ordinary partitions and cylindric partitions for small profiles
  with additional constraints}
\label{secBijections}

In this section, we state identities between cylindric partitions and ordinary partitions.
The cylindric partitions have small profiles with additional constraints.
The bijective proofs are similar, so we give the proof of only one identity in detail.
There are examples for all theorems.

\begin{theorem}
\label{thmKKprfl11}
Let $\mathcal{C}^d_{(1,1)}$ denote the set of cylindric partitions
$\Lambda=(\lambda^{(1)},\lambda^{(2)})$ of profile $(1,1)$, written as
pairs
\[
P_k=[\lambda^{(2)}_k,\lambda^{(1)}_k]=[b_k,a_k],
\qquad
1\leq k\leq N,
\]
where
\[
N=\max\bigl\{\ell(\lambda^{(1)}),\ell(\lambda^{(2)})\bigr\},
\]
with zeros appended to the shorter component when necessary, such that
$a_k-b_k\in\{0,1\}$ for every $k$;
and in which every part from 1 to $\mathrm{max}(\Lambda)$ appears.
Then, $\mathcal{C}^d_{(1,1)}$ is in 1-1 correspondence
with partitions into distinct parts.
\end{theorem}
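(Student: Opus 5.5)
The plan is to show that the pair restriction $a_k-b_k\in\{0,1\}$ is redundant once every part from $1$ to $\mathrm{max}(\Lambda)$ appears. This reduces $\mathcal{C}^d_{(1,1)}$ to ordinary partitions in which every part from $1$ up to the largest part occurs, and those go to partitions into distinct parts by conjugation.

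\textbf{Flattening.} For profile $(1,1)$ the defining inequalities are $\lambda^{(1)}_j\geq \lambda^{(2)}_{j+1}$ and $\lambda^{(2)}_j\geq\lambda^{(1)}_{j+1}$. The condition $a_k-b_k\in\{0,1\}$ gives $a_k\ge b_k$. Together these say exactly that the interleaved sequence
\[
x=(x_1,x_2,\ldots,x_{2N})=(a_1,b_1,a_2,b_2,\ldots,a_N,b_N)
\]
is weakly decreasing. Conversely, any weakly decreasing sequence of nonnegative integers, read off alternately into $\lambda^{(1)}$ and $\lambda^{(2)}$, satisfies the profile-$(1,1)$ inequalities, and $N$ is recovered as $\lceil \ell(x)/2\rceil$. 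Thus $\Lambda\mapsto x$ is a weight-preserving bijection between profile-$(1,1)$ cylindric partitions with $a_k\ge b_k$ for all $k$ and ordinary partitions, with $\vert\Lambda\vert=\vert x\vert$ and $\mathrm{max}(\Lambda)=x_1$.

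\textbf{Redundancy of the pair condition.} Suppose $x$ is weakly decreasing and contains every integer $1,\ldots,x_1$. Then consecutive entries differ by at most $1$. This holds including at the transition to an appended trailing $0$, because the smallest positive entry is $1$. In particular $x_{2k-1}-x_{2k}\in\{0,1\}$, which is precisely $a_k-b_k\in\{0,1\}$. Hence $\mathcal{C}^d_{(1,1)}$ corresponds under flattening exactly to the set of partitions $x$ in which every part from $1$ to the largest part appears.

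\textbf{Conjugation.} Apply conjugation: set $\mu_i=\#\{j: x_j\geq i\}$ for $1\le i\le x_1$. Then $\mu_i-\mu_{i+1}$ equals the multiplicity of $i$ in $x$, which is positive for every $i\le x_1$. So $\mu$ has distinct parts and $\vert\mu\vert=\vert x\vert$. Conversely, the conjugate of a partition into distinct parts contains every part from $1$ to its largest part. The composite $\Lambda\mapsto x\mapsto\mu$ is therefore the desired bijection, and it also preserves weight.

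\textbf{Main obstacle and checks.} The only delicate point is the bookkeeping at the tail. If $\ell(x)$ is odd, the last pair is $[0,1]$, that is, $b_N=0$ and $a_N=1$. We must check that this is allowed, namely that it is consistent with the convention for $N$ and with the condition $a_N-b_N=1$. A small check confirms the count: for weight $3$ the two objects are $x=2+1$ and $x=1+1+1$, matching the distinct partitions $3$ and $2+1$. I would finish with a worked example.
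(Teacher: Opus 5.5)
Your proof is correct and follows essentially the paper's route: the paper stacks horizontal slices whose weights are the parts $\mu_h$ of the distinct partition (row lengths $\lceil \mu_h/2\rceil$ and $\lfloor \mu_h/2\rfloor$), which is exactly de-interleaving the conjugate of $\mu$ as you do (on the paper's example $(12,11,8,6,5,3,2)$ both give the same $\Lambda$), and your flattening and redundancy steps supply the verification the paper leaves to that example. Tighten your opening sentence, though: only the bound $a_k-b_k\le 1$ is redundant, not $a_k\ge b_k$ (e.g.\ $((1),(2))$ is a profile-$(1,1)$ cylindric partition using all parts $1,2$ with $a_1<b_1$), and the body of your argument correctly assumes $a_k\ge b_k$.
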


Both the theorem and the proof are easy.
This is almost the same as
taking the conjugate of a partition into distinct parts.

{\bf Example: }
Start with a partition into distinct parts.
\begin{align*}
(12,11,8,6,5,3,2)
\end{align*}
\begin{align*}
\longrightarrow
\hspace{-5mm} \hbox{ \resizebox{2\width}{2\height}{
\ytableausetup{boxsize=0.2em}
\ydiagram{ 1+6, 0+6 } } },  
\hspace{-5mm} \hbox{ \resizebox{2\width}{2\height}{
\ytableausetup{boxsize=0.2em}
\ydiagram{ 1+6, 0+5 } } }, 
\hspace{-5mm} \hbox{ \resizebox{2\width}{2\height}{
\ytableausetup{boxsize=0.2em}
\ydiagram{ 1+4, 0+4 } } },  
\hspace{-5mm} \hbox{ \resizebox{2\width}{2\height}{
\ytableausetup{boxsize=0.2em}
\ydiagram{ 1+3, 0+3 } } }, 
\hspace{-5mm} \hbox{ \resizebox{2\width}{2\height}{
\ytableausetup{boxsize=0.2em}
\ydiagram{ 1+3, 0+2 } } }, 
\hspace{-5mm} \hbox{ \resizebox{2\width}{2\height}{
\ytableausetup{boxsize=0.2em}
\ydiagram{ 1+2, 0+1 } } }, 
\hspace{-5mm} \hbox{ \resizebox{2\width}{2\height}{
\ytableausetup{boxsize=0.2em}
\ydiagram{ 1+1, 0+1 } } } 
\end{align*}
Now stack them all:
\begin{align*}
\longrightarrow
\Lambda =
\begin{array}{ccc ccc cc}
& & 7 & 6 & 5 & 3 & 2 & 2 \\
& 7 & 5 & 4 & 3 & 2 & 1 & \\
\textcolor{lightgray}{7} & \textcolor{lightgray}{6}
& \textcolor{lightgray}{5} & \textcolor{lightgray}{3}
& \textcolor{lightgray}{2} & \textcolor{lightgray}{2} & &
\end{array}
\end{align*}
Indeed, all parts from 1 to $\mathrm{max}(\Lambda) = $7 appear,
and $0 \leq a_k - b_k \leq 1$ for $k = $1, 2, \ldots, 6.

The generating function is as follows.
\begin{align*}
\sum_{ \Lambda \in \mathcal{C}^d_{(1,1)} } q^{ \vert \Lambda \vert }
z^{ \mathrm{max}(\Lambda) }
u^{ \partsize(\Lambda) }
= \left( -uzq; q^2 \right)_\infty \;
\sum_{n \geq 0} \frac{ q^{ n^2 + n } u^n z^n }{ (q^2; q^2)_n }
\end{align*}
\begin{align}
\label{genFuncKKprfl11}
= \left( -uzq; q^2 \right)_\infty \; \left( -uzq^2; q^2 \right)_\infty
= \left( -uzq; q \right)_\infty
\end{align}

We now turn to Theorem \ref{c=(1,1)bijection}.
By Theorem~4 of \cite{K-OS}, specialized to
$c=(c_1,c_2)=(1,1)$, the fixed inner partition is
\[
\gamma_0=(c_1+c_2,c_1)=(2,1).
\]
The empty slice is represented by
\[
E=\gamma_0\setminus\gamma_0.
\]
Thus, $\gamma_0$ is the fixed inner partition encoding the profile,
whereas $E$ is the empty slice. By Definition~3 of \cite{K-OS}, the
Young diagrams of the partitions in the chain are stacked with their
\emph{left and top edges aligned}. Hence, no additional column shift
is applied; the shift associated with the profile is already encoded
by $\gamma_0$. Throughout the proof, we write $\gamma_j$ in place of
the notation $\beta_j$ used in \cite{K-OS}, so as to avoid confusion
with the partition $\beta$ appearing in the statement of the theorem.
In particular,
\[
\gamma_0=\beta_0=(2,1).
\]

\begin{proof}[Proof of Theorem \ref{c=(1,1)bijection}]
Let $\mathcal{C}$ denote the set of cylindric partitions satisfying
the conditions in the statement, and let $\mathcal{P}$ denote the
set of pairs $(\mu,\beta)$ such that every positive part occurring
in $\mu$ has multiplicity exactly two and $\beta$ is a partition
into distinct odd parts.
We construct mutually inverse maps
\[
\Phi:\mathcal{C}\longrightarrow\mathcal{P}
\qquad\text{and}\qquad
\Psi:\mathcal{P}\longrightarrow\mathcal{C}.
\]
Throughout the proof, zero entries may be used as terminal padding in
the pair notation, but they are not regarded as parts of a cylindric
partition.

\medskip
\noindent
\emph{Step 1: Pair notation and the zigzag inequalities.}
Let
\[
\Lambda=(\lambda^{(1)},\lambda^{(2)})\in\mathcal{C}.
\]
We encode $\Lambda$ by the pairs
\[
P_k=[b_k,a_k],
\qquad
a_k=\lambda^{(1)}_k,
\qquad
b_k=\lambda^{(2)}_k,
\qquad
1\leq k\leq N,
\]
where zeros are appended if necessary. We also append the zero pair
\[
P_{N+1}=[0,0].
\]
By hypothesis,
\[
a_k-b_k\in\{0,1\}.
\]
Put
\[
\varepsilon_k:=a_k-b_k,
\qquad
1\leq k\leq N,
\]
and set $\varepsilon_{N+1}=0$. Thus,
\[
P_k=
\begin{cases}
[b_k,b_k],&\varepsilon_k=0,\\[2mm]
[b_k,b_k+1],&\varepsilon_k=1.
\end{cases}
\]
Since $\Lambda$ has profile $c=(1,1)$, Definition \ref{def:cylin} gives
\[
a_k\geq b_{k+1}
\qquad\text{and}\qquad
b_k\geq a_{k+1}.
\]
Together with $a_k\geq b_k$, this yields the zigzag inequalities
\[
a_k\geq b_k\geq a_{k+1},
\qquad
1\leq k\leq N.
\]
Let
\[
J(\Lambda)
=
\{k\in\{1,\ldots,N\}:\varepsilon_k=1\}
\]
be the set of indices at which $a_k=b_k+1$. Define
\[
S_k
=
\#\{j>k:j\in J(\Lambda)\}
=
\sum_{j>k}\varepsilon_j
\]
and
\[
r_k=b_k-S_k.
\]
We also put
\[
S_{N+1}=r_{N+1}=0.
\]
Since
\[
S_k-S_{k+1}=\varepsilon_{k+1},
\]
we have
\[
\begin{aligned}
r_k-r_{k+1}
&=
b_k-b_{k+1}-(S_k-S_{k+1})\\
&=
b_k-b_{k+1}-\varepsilon_{k+1}\\
&=
b_k-a_{k+1}\geq0.
\end{aligned}
\]
Thus $(r_k)$ is weakly decreasing. By the assumptions in the statement, there exists an integer $t$
with $0\leq t\leq N$ such that
\[
r_1>r_2>\cdots>r_t>0
\]
and
\[
r_k=0,
\qquad
t<k\leq N+1,
\]
where the first condition is vacuous when $t=0$.

\medskip
\noindent
\emph{Step 2: Shapes of slices for profile $(1,1)$.}
Let
\[
{}_1\Sigma\geq{}_2\Sigma\geq\cdots
\geq{}_m\Sigma>E,
\qquad
m=\max(\Lambda),
\]
be the horizontal slice decomposition of $\Lambda$, as in
Section~2.1 \cite{K-OS}. For a level $h$, write
\[
{}_h\Sigma
=
({}_h\sigma^{(1)},{}_h\sigma^{(2)})
\]
and set
\[
n_i(h)
=
l({}_h\sigma^{(i)})
=
\#\{j:\lambda^{(i)}_j\geq h\},
\qquad i=1,2.
\]
Thus,
\[
|{}_h\Sigma|=n_1(h)+n_2(h).
\]
The profile inequalities imply
\[
|n_1(h)-n_2(h)|\leq1.
\]
Indeed, if
\[
n_2(h)\geq n_1(h)+2,
\]
then
\[
\lambda^{(2)}_{n_1(h)+2}\geq h,
\]
whereas
\[
\lambda^{(1)}_{n_1(h)+1}<h.
\]
This contradicts
\[
\lambda^{(1)}_{n_1(h)+1}
\geq
\lambda^{(2)}_{n_1(h)+2}.
\]
The reverse inequality follows symmetrically.
Moreover, since $a_k\geq b_k$ for every $k$, we have
\[
n_1(h)\geq n_2(h)
\]
at every level $h$. Hence, within the family considered here, only
the following two possibilities occur:
\[
n_1(h)=n_2(h)
\qquad\text{or}\qquad
n_1(h)=n_2(h)+1.
\]
For profile $c=(1,1)$, the partition encoding the profile in
Theorem~4 \cite{K-OS} is
\[
\gamma_0=(c_1+c_2,c_1)=(2,1).
\]
Thus the empty slice is
\[
E=\gamma_0\setminus\gamma_0,
\]
and the shape of zero, computed via Definition~8 \cite{K-OS} from $\gamma_0$, is
$(1)$.
We use the letter $\gamma$ for the outer partitions of slices in
order to avoid confusion with the partition $\beta$ appearing in
the statement of the present theorem. If a slice has row lengths
$n_1$ and $n_2$, then it is represented as
\[
\gamma\setminus\gamma_0,
\qquad
\gamma=(n_1+2,n_2+1).
\]
By Definition~8 \cite{K-OS}, its shape is
\[
((n_1+2)-(n_2+1))=(n_1-n_2+1).
\]
Consequently,
\[
n_1=n_2
\quad\Longleftrightarrow\quad
\text{shape }(1),
\]
and
\[
n_1=n_2+1
\quad\Longleftrightarrow\quad
\text{shape }(2).
\]
A shape-$(1)$ slice has even weight, whereas a shape-$(2)$ slice has
odd weight. Slices of shape $(0)$ do not occur in the family under
consideration.

\medskip
\noindent
\emph{Step 3: Identification of the pivot slices.}
We first characterize the pivots using Definition~3 \cite{K-OS}. By that definition,
the Young diagrams $\gamma_0,\gamma_1,\dots$ are stacked with their
left and top edges aligned, so all column numbers below are ordinary
Young-diagram coordinates; the cylindric shift of profile $(1,1)$ is
already encoded once and for all by the fixed reference partition
$\gamma_0=(2,1)$, and no further shift is applied at each step.
Temporarily discard the multiplicities of the appearing slices and
write them as
\[
\gamma_1\setminus\gamma_0,\,
\gamma_2\setminus\gamma_0,\,
\ldots,\,
\gamma_s\setminus\gamma_0,
\]
ordered by inclusion. As in the proof of Theorem~4 of \cite{K-OS},
starting with the zero slice, complete the gaps between successive
appearing slices by inserting the boxes of
\[
\gamma_j\setminus\gamma_{j-1},
\qquad
1\leq j\leq s,
\]
one at a time, column by column from left to right and, within each
column, from top to bottom. After reaching the largest appearing slice
$\gamma_s\setminus\gamma_0$, extend the path through the hypothetical
infinite skew diagram so that it returns to the default path as quickly
as possible, as in the proof of Theorem~4 of \cite{K-OS}. When $s=0$,
take the default path itself. The resulting infinite completed path
contains exactly one slice of each nonnegative weight. The slices immediately preceding and following an
appearing slice in this completed path need not themselves be appearing
slices of $\Lambda$; they may be intermediate slices inserted during
the completion procedure. Because the boxes are inserted column by column from left to right,
the last box inserted before an appearing slice lies in the rightmost
column of the corresponding inner skew difference, while the first box
inserted after it lies in the leftmost column of the corresponding
outer skew difference. Hence, these two boxes test precisely the pivot
condition in Definition~3 of \cite{K-OS}.

Consider first an appearing shape-$(2)$ slice of weight $2k-1$, where
$k\geq1$. Its outer partition is
\[
(k+2,k).
\]
For any slice of profile $(1,1)$, Step~2 gives
\[
|n_1-n_2|\leq1,
\]
and hence its shape
\[
(n_1-n_2+1)
\]
belongs to $\{0,1,2\}$. Moreover,
\[
n_1+n_2\equiv n_1-n_2\pmod 2.
\]
Consequently, a slice has even weight if and only if it has shape
$(1)$, whereas slices of shapes $(0)$ and $(2)$ have odd weight.
Since $2k-2$ and $2k$ are both even, the slices of these immediately
adjacent weights in the completed path must both have shape $(1)$.
Their outer partitions are
\[
(k+1,k)
\qquad\text{and}\qquad
(k+2,k+1),
\]
respectively. Thus, the relevant part of the completed chain is
\[
(k+1,k)
\subset
(k+2,k)
\subset
(k+2,k+1).
\]
The box inserted immediately before the middle slice is
\[
(1,k+2),
\]
whereas the box inserted immediately after it is
\[
(2,k+1).
\]
Therefore, the rightmost column of the skew diagram immediately inside
the middle slice is $k+2$, while the leftmost column of the skew diagram
immediately outside it is $k+1$. Since
\[
k+2>k+1,
\]
the strict inequality in Definition~3 of \cite{K-OS} is satisfied.
Hence, every appearing shape-$(2)$ slice is a pivot.

For completeness, consider a shape-$(0)$ slice of weight $2k-1$, if
such a slice occurs in a general profile-$(1,1)$ cylindric partition.
Its outer partition is
\[
(k+1,k+1).
\]
In the completed path, the relevant part of the chain is
\[
(k+1,k)
\subset
(k+1,k+1)
\subset
(k+2,k+1).
\]
The box inserted immediately before the middle slice lies in column
$k+1$, whereas the box inserted immediately after it lies in column
$k+2$. Thus, the rightmost column immediately inside the middle slice
is not strictly to the right of the leftmost column immediately outside
it. Consequently, the strict inequality in Definition~3 of
\cite{K-OS} fails, and a shape-$(0)$ slice is not a pivot.

We finally verify that an appearing shape-$(1)$ slice is not a pivot.
Let its weight be $2k$, where $k\geq1$, so that its outer partition is
\[
(k+2,k+1).
\]
In the completed path, the immediately preceding slice has weight
$2k-1$ and shape either $(0)$ or $(2)$. Accordingly, the last box
inserted before the shape-$(1)$ slice lies in column $k+2$ or $k+1$.
Thus, the rightmost column immediately inside the shape-$(1)$ slice is
at most $k+2$.

Similarly, the immediately following slice in the completed path has
weight $2k+1$ and shape either $(0)$ or $(2)$. The first box inserted
after the shape-$(1)$ slice therefore lies in column $k+2$ or $k+3$.
Hence, the leftmost column immediately outside the shape-$(1)$ slice is
at least $k+2$. The rightmost column immediately inside the slice is
therefore not strictly to the right of the leftmost column immediately
outside it. Thus, the strict inequality in Definition~3 of
\cite{K-OS} cannot hold, and a shape-$(1)$ slice is not a pivot.

It follows that, within the family considered here, an appearing slice
is a pivot if and only if it has shape $(2)$.

We now identify these pivots in terms of the pairs $P_k$. Let
$k\in J(\Lambda)$, so that
\[
P_k=[b_k,b_k+1]
\qquad\text{and}\qquad
a_k=b_k+1.
\]
Consider the horizontal slice at level $a_k$ and denote it by
\[
\Pi_k:={}_{a_k}\Sigma.
\]
Notice that $k$ indexes the pair producing the pivot, whereas $a_k$
is the level of the corresponding horizontal slice.
If $j<k$, repeated use of
\[
a_i\geq b_i\geq a_{i+1}
\]
gives
\[
a_j\geq a_k.
\]
For $j=k$, this is immediate. If $j>k$, then
\[
a_j\leq b_k<a_k.
\]
Therefore, exactly the first $k$ entries of the first component are
at least $a_k$, and hence
\[
n_1(a_k)=k.
\]
Similarly, if $j\leq k-1$, the zigzag inequalities give
\[
b_j\geq a_k,
\]
whereas
\[
b_k=a_k-1<a_k.
\]
Thus exactly the first $k-1$ entries of the second component are at
least $a_k$, and therefore
\[
n_2(a_k)=k-1.
\]
Consequently,
\[
|\Pi_k|
=
n_1(a_k)+n_2(a_k)
=
k+(k-1)
=
2k-1.
\]
Moreover,
\[
n_1(a_k)-n_2(a_k)=1,
\]
so $\Pi_k$ has shape $(2)$ and is therefore a pivot.

Conversely, suppose that an appearing slice at some level $h$ has
shape $(2)$. Then, for a unique $k$,
\[
n_1(h)=k,
\qquad
n_2(h)=k-1.
\]
It follows that
\[
a_k\geq h
\qquad\text{and}\qquad
b_k<h.
\]
Since
\[
a_k-b_k\in\{0,1\},
\]
these inequalities force
\[
a_k=h=b_k+1.
\]
Therefore,
\[
k\in J(\Lambda)
\qquad\text{and}\qquad
{}_h\Sigma=\Pi_k.
\]

Thus the pivot slices of $\Lambda$ are precisely
\[
\{\Pi_k:k\in J(\Lambda)\},
\]
and their weights are
\[
\{2k-1:k\in J(\Lambda)\}.
\]
We define
\[
\beta
=
\bigl(2k-1:k\in J(\Lambda)\bigr),
\]
with its parts written in decreasing order. Hence $\beta$ is a
partition into distinct odd parts. Moreover, in profile $(1,1)$
every pivot in this family has shape $(2)$, and a slice is uniquely
determined by its weight and shape. Therefore, the parts of $\beta$
retain the complete information carried by the pivot chain.

\medskip
\noindent
\emph{Step 4: Removal of the pivot slices.}
Following Theorem~4 \cite{K-OS}, let $\Lambda^\circ$ be the
cylindric partition obtained by summing the horizontal slices
remaining after one copy of each pivot slice
\[
\Pi_j={}_{a_j}\Sigma,
\qquad j\in J(\Lambda),
\]
has been deleted from the slice decomposition of $\Lambda$.

Fix an index $k$. If $j>k$, then the zigzag inequalities imply
\[
b_k\geq a_j.
\]
Since also $a_k\geq b_k$, both entries of $P_k$ lie at or above the
level $a_j$. Therefore, removing $\Pi_j$ subtracts $1$ from both
entries of $P_k$. There are exactly $S_k$ such indices $j$.

If $k\in J(\Lambda)$, then
\[
\Pi_k={}_{a_k}\Sigma
\]
contains the upper entry $a_k$ of $P_k$, but not the lower entry
$b_k=a_k-1$. Hence removing $\Pi_k$ subtracts one additional unit
from the upper entry only.

Finally, suppose that $j<k$. Since $j\in J(\Lambda)$,
\[
a_j=b_j+1>a_{j+1}.
\]
Together with the zigzag inequalities, this gives
\[
a_j>a_{j+1}\geq a_k\geq b_k.
\]
Thus neither entry of $P_k$ reaches the level $a_j$, and $\Pi_j$
does not affect $P_k$.

Consequently, after one copy of each pivot slice has been removed,
the pair $P_k$ becomes
\[
\begin{aligned}
\bigl[b_k-S_k,\,
a_k-S_k-\varepsilon_k\bigr]
&=
\bigl[b_k-S_k,\,
b_k+\varepsilon_k-S_k-\varepsilon_k\bigr]\\
&=
\bigl[r_k,r_k\bigr].
\end{aligned}
\]
After terminal zero pairs are omitted, we obtain
\[
\Lambda^\circ
=
\bigl(
[r_1,r_1],
[r_2,r_2],
\ldots,
[r_t,r_t]
\bigr).
\]
Equivalently, both components of $\Lambda^\circ$ are equal to the
strict partition
\[
\rho=(r_1,r_2,\ldots,r_t).
\]

Let $\nu$ denote the unrestricted partition furnished by Theorem~4 \cite{K-OS}.
We use $\nu$ here, rather than the notation $\mu$ used in that
theorem, in order to reserve $\mu$ for the partition in the statement
of the present theorem. The parts of $\nu$ are the weights of the
remaining slices, each repeated according to the multiplicity of that
slice. These are exactly the horizontal slices of $\Lambda^\circ$.

Set
\[
P
=
(r_1,r_1,r_2,r_2,\ldots,r_t,r_t).
\]
At a level $h$, the two components of $\Lambda^\circ$ each contain
$\rho'_h$ entries. Hence the horizontal slice at level $h$ has
weight
\[
2\rho'_h.
\]
On the other hand,
\[
P'_h
=
\#\{\text{parts of }P\text{ that are at least }h\}
=
2\rho'_h.
\]
It follows that
\[
\nu=P'.
\]
Define
\[
\mu:=\nu'.
\]
Then
\[
\mu
=
P
=
(r_1,r_1,r_2,r_2,\ldots,r_t,r_t).
\]
Since
\[
r_1>r_2>\cdots>r_t>0,
\]
every positive part occurring in $\mu$ occurs exactly twice.

We have therefore defined
\[
\Phi(\Lambda)=(\mu,\beta)\in\mathcal{P}.
\]

\medskip
\noindent
\emph{Step 5: Construction of the inverse map.}
Let $(\mu,\beta)\in\mathcal{P}$. Since every positive part of $\mu$
occurs exactly twice, it can be written uniquely as
\[
\mu
=
(r_1,r_1,r_2,r_2,\ldots,r_t,r_t),
\]
where
\[
r_1>r_2>\cdots>r_t>0.
\]
Extend this sequence by setting
\[
r_k=0,
\qquad k>t.
\]
Since $\beta$ is a partition into distinct odd parts, define
\[
J
=
\{k\geq1:2k-1\text{ is a part of }\beta\}
\]
and
\[
\varepsilon_k
=
\begin{cases}
1,&k\in J,\\
0,&k\notin J.
\end{cases}
\]
Put
\[
S_k
=
\sum_{j>k}\varepsilon_j,
\qquad
b_k=r_k+S_k,
\qquad
a_k=b_k+\varepsilon_k.
\]
Set
\[
N=\max\bigl(t,\max J\bigr),
\]
where $\max\varnothing=0$, and form
\[
P_k=[b_k,a_k],
\qquad
1\leq k\leq N.
\]
If $N=0$, the resulting cylindric partition is empty.

Clearly,
\[
a_k-b_k=\varepsilon_k\in\{0,1\}.
\]
Thus every pair consists either of two equal entries or of two
consecutive entries. If $\varepsilon_k=1$, then
\[
P_k=[b_k,b_k+1],
\]
so the smaller entry lies in the second component, as required.
Since
\[
S_k=S_{k+1}+\varepsilon_{k+1},
\]
we have
\[
\begin{aligned}
b_k-a_{k+1}
&=
r_k+S_k-
\bigl(r_{k+1}+S_{k+1}+\varepsilon_{k+1}\bigr)\\
&=
r_k-r_{k+1}\geq0.
\end{aligned}
\]
Together with $a_k\geq b_k$, this gives
\[
a_k\geq b_k\geq a_{k+1}.
\]
These inequalities imply that both
\[
(a_1,a_2,\ldots,a_N)
\qquad\text{and}\qquad
(b_1,b_2,\ldots,b_N)
\]
are weakly decreasing. They also give
\[
b_k\geq a_{k+1}
\]
and, by transitivity,
\[
a_k\geq b_{k+1}.
\]
Therefore,
\[
\lambda^{(1)}_k\geq\lambda^{(2)}_{k+1}
\qquad\text{and}\qquad
\lambda^{(2)}_k\geq\lambda^{(1)}_{k+1}.
\]
Thus the pairs $P_k=[b_k,a_k]$ define a cylindric partition
$\Lambda$ of profile $(1,1)$.

For this cylindric partition,
\[
J(\Lambda)=J
\]
and
\[
\#\{j>k:j\in J(\Lambda)\}
=
\sum_{j>k}\varepsilon_j
=
S_k.
\]
Consequently,
\[
b_k-S_k=r_k.
\]
Hence the positive values of the resulting sequence are
\[
r_1>r_2>\cdots>r_t>0,
\]
and all subsequent values are zero. Therefore,
\[
\Lambda\in\mathcal{C}.
\]

By Step 3, the pivot slices of $\Lambda$ are precisely
\[
\Pi_k={}_{a_k}\Sigma,
\qquad k\in J,
\]
and
\[
|\Pi_k|=2k-1.
\]
Thus the partition of pivot weights is exactly $\beta$.

Removing one copy of every pivot slice changes $P_k$ into
\[
[b_k-S_k,\,
a_k-S_k-\varepsilon_k]
=
[r_k,r_k].
\]
The cylindric partition remaining after the pivots have been removed
therefore has both components equal to
\[
(r_1,r_2,\ldots,r_t).
\]
As shown in Step 4, the unrestricted partition furnished by Theorem~4 \cite{K-OS}
is the conjugate of
\[
(r_1,r_1,r_2,r_2,\ldots,r_t,r_t)=\mu.
\]
Hence the first component of the present correspondence is exactly
$\mu$.

We define
\[
\Psi((\mu,\beta))=\Lambda.
\]

\medskip
\noindent
\emph{Step 6: Verification that the constructions are inverse.}
Starting with $\Lambda\in\mathcal{C}$, the forward construction
records
\[
\varepsilon_k=a_k-b_k,
\qquad
S_k=\sum_{j>k}\varepsilon_j,
\qquad
r_k=b_k-S_k.
\]
The inverse construction then recovers
\[
b_k=r_k+S_k
\]
and
\[
a_k=b_k+\varepsilon_k.
\]
Thus every original pair $P_k=[b_k,a_k]$ is recovered, and hence
\[
\Psi\circ\Phi
=
\operatorname{id}_{\mathcal{C}}.
\]
Conversely, starting with $(\mu,\beta)\in\mathcal{P}$, the inverse
construction defines the sequences $(r_k)$ and $(\varepsilon_k)$.
For the resulting cylindric partition, the forward construction
recovers
\[
a_k-b_k=\varepsilon_k
\]
and
\[
b_k-S_k=r_k.
\]
It therefore recovers
\[
\mu
=
(r_1,r_1,\ldots,r_t,r_t)
\]
and
\[
\beta
=
(2k-1:\varepsilon_k=1).
\]
Hence
\[
\Phi\circ\Psi
=
\operatorname{id}_{\mathcal{P}}.
\]
The empty cylindric partition corresponds to
$(\varnothing,\varnothing)$, and conversely, so the empty case is
included. Therefore, $\Phi$ is a bijection from $\mathcal{C}$ to
$\mathcal{P}$.

Finally, by the weight statement in Theorem~4 \cite{K-OS},
\[
|\Lambda|
=
|\nu|
+
\sum_{k\in J(\Lambda)}|\Pi_k|.
\]
Since
\[
|\Pi_k|=2k-1,
\]
we obtain
\[
|\Lambda|
=
|\nu|
+
\sum_{k\in J(\Lambda)}(2k-1)
=
|\nu|+|\beta|.
\]
Conjugation preserves weight, so
\[
|\mu|=|\nu'|=|\nu|.
\]
Consequently,
\[
|\Lambda|=|\mu|+|\beta|.
\]
Thus the bijection is weight-preserving.
\end{proof}

\begin{ex}\label{ex:c=(1,1)bijection2}
We illustrate Theorem~\ref{c=(1,1)bijection}. Let $N=6$ and
\[
a=(a_1,\dots,a_6)=(9,7,5,2,1,1),
\qquad
b=(b_1,\dots,b_6)=(9,6,4,2,1,0),
\]
so that $\lambda^{(1)}=(9,7,5,2,1,1)$, $\lambda^{(2)}=(9,6,4,2,1)$ and

\[
\Lambda=
\begin{array}{rrrrrrr}
& 9 & 7 & 5 & 2 & 1 & 1\\
9 & 6 & 4 & 2 & 1 & &
\end{array}.
\]

One checks directly that
\[
a_k-b_k\in\{0,1\}\qquad(k=1,\dots,6),
\]
and that the zigzag inequalities hold:
\[
a_1\ge b_1\ge a_2\ge b_2\ge a_3\ge b_3\ge a_4\ge b_4\ge a_5\ge b_5\ge a_6\ge b_6,
\]
\[
9\ge9\ge7\ge6\ge5\ge4\ge2\ge2\ge1\ge1\ge1\ge0.
\]

\smallskip
\noindent\emph{Step 1: We determine the values $\varepsilon_k$, $J(\Lambda)$, $S_k$, $r_k$, $t$.}
\[
\varepsilon=(\varepsilon_1,\dots,\varepsilon_6)=(0,1,1,0,0,1),
\qquad
J(\Lambda)=\{2,3,6\}.
\]
\[
S_1=3,\ S_2=2,\ S_3=1,\ S_4=1,\ S_5=1,\ S_6=0,
\]
Then, using $r_k=b_k-S_k$ with $b=(9,6,4,2,1,0)$:
\[
r_1=b_1-S_1=9-3=6,
\qquad
r_2=b_2-S_2=6-2=4,
\qquad
r_3=b_3-S_3=4-1=3,
\]
\[
r_4=b_4-S_4=2-1=1,
\qquad
r_5=b_5-S_5=1-1=0,
\qquad
r_6=b_6-S_6=0-0=0.
\]
Since $r_1=6>r_2=4>r_3=3>r_4=1>0=r_5=r_6$, the hypothesis of Theorem~\ref{c=(1,1)bijection} holds with $t=4$.

\smallskip
\noindent\emph{Step 2:}
The indices at which a new slice actually occurs are the last index of each run of equal parts in $\lambda^{(1)}=(9,7,5,2,1,1)$, namely $k=1,2,3,4,6$; the index $k=5$ is excluded, since $a_5=a_6=1$ means $\Pi_5=\Pi_6$ is the very same slice already recorded at $k=6$. Computing $n_1(a_k),n_2(a_k)$ at each of these indices:
\[
\begin{array}{c|c|c|c|c|c|c}
k & a_k & n_1(a_k) & n_2(a_k) & |\Pi_k| & \text{shape} & \text{pivot?}\\\hline
1 & 9 & 1 & 1 & 2  & 1 & \text{no}\\
2 & 7 & 2 & 1 & 3=2(2)-1  & 2 & \text{yes}\\
3 & 5 & 3 & 2 & 5=2(3)-1  & 2 & \text{yes}\\
4 & 2 & 4 & 4 & 8  & 1 & \text{no}\\
6 & 1 & 6 & 5 & 11=2(6)-1 & 2 & \text{yes}
\end{array}
\]
Shape is $(1)$ at the non-pivots $k=1,4$ and $(2)$ at the pivots $k=2,3,6$, with $|\Pi_k|=2k-1$ precisely at the latter. Consequently,
\[
\beta=(11,5,3),
\]
a partition into distinct odd parts.

\smallskip
\noindent\emph{Step 3: removing the pivot slices.}
Using $P_k\mapsto[\,b_k-S_k,\ a_k-S_k-\varepsilon_k\,]$,
\[
P_1=[9,9]\mapsto[6,6],\quad P_2=[6,7]\mapsto[4,4],\quad P_3=[4,5]\mapsto[3,3],
\]
\[
P_4=[2,2]\mapsto[1,1],\quad P_5=[1,1]\mapsto[0,0],\quad P_6=[0,1]\mapsto[0,0].
\]
After discarding the terminal zero pairs, both components of $\Lambda^\circ$ equal the strict partition
\[
\rho=(r_1,r_2,r_3,r_4)=(6,4,3,1).
\]

\smallskip
\noindent\emph{Step 4: the partition $\mu$.}
\[
\mu=(r_1,r_1,r_2,r_2,r_3,r_3,r_4,r_4)=(6,6,4,4,3,3,1,1),
\]
in which every positive part occurs exactly twice.

\smallskip
\noindent\emph{Step 5: weight preservation.}
\[
|\Lambda|=(9+7+5+2+1+1)+(9+6+4+2+1+0)=25+22=47,
\]
\[
|\mu|=6+6+4+4+3+3+1+1=28,\qquad |\beta|=11+5+3=19,
\]
and indeed $|\mu|+|\beta|=28+19=47=|\Lambda|$.

\smallskip
\noindent\emph{Step 6: recovering $\Lambda$ from $(\mu,\beta)$ via $\Psi$.}
Starting from $(\mu,\beta)=\bigl((6,6,4,4,3,3,1,1),(11,5,3)\bigr)$, we recover $\rho=(6,4,3,1)$, hence $t=4$. From $\beta=(11,5,3)$,
\[
J=\{k\ge1:2k-1\in\beta\}=\{2,3,6\},
\]
so $\varepsilon=(0,1,1,0,0,1)$ on $\{1,\dots,6\}$ (with $N=\max(t,\max J)=6$). Then $S_1=3$, $S_2=2$, $S_3=1$, $S_4=1$, $S_5=1$, $S_6=0$, and using $b_k=r_k+S_k$:
\[
b_1=6+3=9,\quad b_2=4+2=6,\quad b_3=3+1=4,\] \[b_4=1+1=2, \quad b_5=0+1=1,\quad b_6=0+0=0,
\]
so
\[
(b_1,b_2,b_3,b_4,b_5,b_6)=(9,6,4,2,1,0).
\]
Then, using $a_k=b_k+\varepsilon_k$ with $\varepsilon=(0,1,1,0,0,1)$:
\[
a_1=9+0=9,\quad a_2=6+1=7,\quad a_3=4+1=5,\]
\[a_4=2+0=2, \quad  a_5=1+0=1,\quad a_6=0+1=1,
\]
so
\[
(a_1,a_2,a_3,a_4,a_5,a_6)=(9,7,5,2,1,1),
\]
which gives  the cylindric partition 
\[
\Lambda=
\begin{array}{rrrrrrr}
& 9 & 7 & 5 & 2 & 1 & 1\\
9 & 6 & 4 & 2 & 1 & &
\end{array}
\]
we started with. This confirms $\Psi(\Phi(\Lambda))=\Lambda$ and, symmetrically, $\Phi(\Psi((\mu,\beta)))=(\mu,\beta)$.
\end{ex}

With $\mathcal{C}$ as in Theorem \ref{c=(1,1)bijection},
\begin{align}
\label{genFuncC11Bijection}
\sum_{ \Lambda \in \mathcal{C} } q^{ \vert \Lambda \vert }
z^{ \mathrm{max}(\Lambda) }
u^{ \partsize(\Lambda) }
= \left( -uzq; q^2 \right)_\infty \;
\sum_{n \geq 0} \frac{ q^{ n^2 + n } u^n z^n }{ (zq^2; q^2)_n }.
\end{align}
This is similar to \eqref{genFuncKKprfl11},
but unless $z = 1$, it is not an infinite product.

\begin{cor}
There is a bijection between the set of cylindric partitions given in Theorem \ref{c=(1,1)bijection} and the set of ordinary partitions into distinct parts.
\end{cor}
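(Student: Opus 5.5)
We plan to compose the bijection $\Phi$ of Theorem \ref{c=(1,1)bijection} with a bijection between $\mathcal P$ and partitions into distinct parts. Since $\Phi$ is weight-preserving, it suffices to construct a weight-preserving bijection
\[
\Theta:\mathcal P\longrightarrow \mathcal D,
\]
where $\mathcal D$ is the set of partitions into distinct parts. The corollary then follows with $\Theta\circ\Phi$.

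The natural construction for $\Theta$ takes $(\mu,\beta)$ with $\mu=(r_1,r_1,\ldots,r_t,r_t)$ and $r_1>\cdots>r_t>0$. We double each distinct part, producing the set $\{2r_1,\ldots,2r_t\}$ of distinct even numbers, whose sum is $|\mu|$. We then take the union with the distinct odd parts of $\beta$. Even and odd parts never collide, so the result is a partition into distinct parts of weight $|\mu|+|\beta|$. The inverse splits a distinct partition into its even parts, halves them, and repeats each half twice to recover $\mu$; the odd parts form $\beta$. Both maps clearly compose to the identity.

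As a consistency check, we will compare generating functions. Setting $u=z=1$ in \eqref{genFuncC11Bijection} gives
\[
(-q;q^2)_\infty\sum_{n\ge0}\frac{q^{n^2+n}}{(q^2;q^2)_n}=(-q;q^2)_\infty(-q^2;q^2)_\infty=(-q;q)_\infty .
\]
This uses Euler's identity in base $q^2$, and it matches \eqref{genFuncKKprfl11}.

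There is essentially no obstacle here. The only point needing care is the bookkeeping that $\mu$ has each part exactly twice, so that halving the even parts is well defined; this holds by the definition of $\mathcal P$. We will also remark that the bijection does not preserve $\max$ or $\partsize$, which is why \eqref{genFuncC11Bijection} is not a product unless $z=1$.
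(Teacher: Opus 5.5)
Your proof is correct, but it is more constructive than the paper's. The paper stops at the product identity $(-q^2;q^2)_\infty(-q;q^2)_\infty=(-q;q)_\infty$ for the pairs $(\mu,\beta)$. That gives equinumerosity in each weight, and hence only the existence of a bijection.

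You instead exhibit the map underlying that identity: fuse each repeated pair $r_k,r_k$ of $\mu$ into the even part $2r_k$, and adjoin the odd parts of $\beta$. So $\Theta\circ\Phi$ sends $\Lambda$ explicitly to the distinct partition with part set $\{2r_k:1\le k\le t\}\cup\{2k-1:k\in J(\Lambda)\}$. Your generating-function check is then redundant, though harmless.

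The payoff of an explicit map is that statistics can be tracked, and doing so shows that your closing remark is half wrong: the composite \emph{does} preserve $\partsize$.

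\begin{itemize}
\item In the chain $a_1\ge b_1\ge a_2\ge\cdots\ge a_N\ge b_N\ge a_{N+1}=0$, the number of distinct positive values equals the number of strict descents.
\item The descent $a_k>b_k$ occurs exactly for $k\in J(\Lambda)$.
\item Since $b_k-a_{k+1}=r_k-r_{k+1}$, the descent $b_k>a_{k+1}$ occurs exactly when $r_k>r_{k+1}$, i.e. for $k=1,\ldots,t$.
\end{itemize}

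Hence $\partsize(\Lambda)=t+|J(\Lambda)|$, which is the number of parts (equivalently, of part sizes) of the image. This agrees with the specialization $z=1$ of \eqref{genFuncC11Bijection}, namely $(-uq;q)_\infty$.

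Only the largest part is not preserved: $\max(\Lambda)=a_1=r_1+|J(\Lambda)|$. For instance, in the worked example following the proof of Theorem \ref{c=(1,1)bijection}, one has $\max(\Lambda)=9$ while the image is $(12,11,8,6,5,3,2)$. So the failure of \eqref{genFuncC11Bijection} to be a product should be attributed to $z$ alone, not to $\partsize$.
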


\begin{proof}
The set of cylindric paritions given in Theorem \ref{c=(1,1)bijection} has a $1-1$ correspondence with the set of partition pairs $(\mu,\beta)$, where $\mu$ is a partition in which each parts appears exactly twice and $\beta$ is a partition into distinct odd parts. The generating function of the partition pairs $(\mu,\beta)$ is given by $(-q^2;q^2)_\infty(-q;q^2)_\infty=(-q;q)_\infty$. The generating function on the right-hand side is the generating function of ordinary partitions into distinct parts. This completes the proof.
\end{proof}

Before we state the last sample theorem,
we need a definition.

\begin{defn}
\label{defCorrectedSeq}
Let

\begin{align}
\nonumber
\Lambda = \begin{array}{ccc ccc ccc}
& & a_0 & a_1 & a_2 & \ldots & a_{r-1} & a_r \\
& &b_1 & b_2  & \ldots & b_{r-1} & b_{r} \\
\textcolor{lightgray}{a_0} & \textcolor{lightgray}{a_1} & \textcolor{lightgray}{a_2} & \textcolor{lightgray}{a_3}
& \textcolor{lightgray}{\ldots} & \textcolor{lightgray}{a_r} &
\end{array}
\end{align}
be a cylindric partition of profile $c=(2,0)$, where the pairs are given by the sequence $[b_1,a_1], [b_2,a_2], [b_3,a_3], \ldots, [b_r,a_r]$.	We note that the largest part of the
cylindric partition $\Lambda$, namely, $a_0$ is not contained in any pairs. We consider it as a single
part.
We define the set of \textit{active pairs} by
\[
S(\Lambda)=\{\,k : a_k>b_k\,\}.
\]
In other words, a pair is called \textit{active}, whenever its second entry is strictly larger than its first entry. For each $k$, we define
\[
N_k=\#\{\,j>k : j\in S(\Lambda)\,\},
\]
that is the number of active pairs to the right of the $k$th pair.
Now, we define the \emph{corrected sequence} $y_0, y_1, \ldots, y_{2r}$ by setting \[
y_0=a_0-|S(\Lambda)|.
\]
For each pair $[b_k,a_k]$,

\begin{enumerate}
\item If $b_k \geq a_k$, define
\[
y_{2k-1}=b_k-N_k,\qquad
y_{2k}=a_k-N_k.
\]

\item If $a_k>b_k$, define
\[
y_{2k-1}=a_k-N_k-1,\qquad
y_{2k}=b_k-N_k.
\]
\end{enumerate}

\end{defn}

\begin{theorem} \label{c=(2,0)bijection}
Call $\mathcal{CO}_{(2,0)}$ the collection of cylindric partitions $\Lambda$
with profile $(2,0)$ such that the positive terms of the corrected sequence
$y_0,y_1,\ldots,y_{2r}$ form a partition into distinct odd parts.
Equivalently, there exists an integer $s$, with $0\le s\le 2r$, such that
\[
y_0>y_1>\cdots>y_s>0,
\]
each positive term is odd, that is,
\[
y_i\equiv1\pmod2,\qquad 0\le i\le s,
\]
and
\[
y_{s+1}=y_{s+2}=\cdots=y_{2r}=0.
\]

Then, $\mathcal{CO}_{(2,0)}$ is in 1-1 correspondence with
the collection of pairs of partitions $(\mu, \beta)$,
where  $\mu$ is an ordinary partition into distinct odd parts
and $\beta$ is an ordinary partition into distinct even parts.
\end{theorem}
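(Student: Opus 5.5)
The bijection will be modeled on the proof of Theorem \ref{c=(1,1)bijection}, with pivots of profile $(2,0)$ in place of pivots of profile $(1,1)$. For profile $(2,0)$ the pivots have shape $(2)$ and even weight. So $\beta$ will be read off from the pivot slices, and $\mu$ from the corrected sequence, which encodes the cylindric partition left after one copy of each pivot is removed.

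\emph{Step 1: zigzag chain and pivots.} First I would write the profile $(2,0)$ inequalities in pair notation. With $a_0$ as the single leading part, Definition \ref{def:cylin} gives $a_{k-1}\ge b_k$, $a_{k-1}\ge a_k$ and $b_k\ge a_{k+1}$, so every entry is bounded by the entries of earlier pairs. At a level $h$ let $n_1(h)=\#\{j\ge0:a_j\ge h\}$ and $n_2(h)=\#\{j:b_j\ge h\}$. As in Step 2 of the previous proof, I expect $n_1(h)-n_2(h)\in\{0,1,2\}$, with slice weight $n_1+n_2$. The pivots (shape $(2)$ in the sense of the paper's remark on profile $(2,0)$) should be exactly the slices at levels $h=a_k$ with $k\in S(\Lambda)$, and possibly also a slice tied to $a_0$. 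This is because an active pair $a_k>b_k$ creates a level where the first row has one extra column relative to the balanced state. I would compute $|\Pi_k|=2k$ by counting entries $\ge a_k$, mirroring the computation $|\Pi_k|=2k-1$ before. Then I set $\beta=(2k:k\in S(\Lambda))$, which is a partition into distinct even parts.

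\emph{Step 2: removing pivots.} Removing one copy of each pivot $\Pi_j$ subtracts $1$ from every entry lying at or above level $a_j$. For entries of pair $k$ this happens exactly for the $N_k$ active pairs $j>k$, plus one extra unit on $a_k$ itself when $k$ is active. The part $a_0$ lies above all pivot levels, which gives $y_0=a_0-|S(\Lambda)|$. After removal, each pair becomes non-active. Reading its two entries in the order forced by the new chain, with the larger entry placed first, gives exactly $(y_{2k-1},y_{2k})$ from Definition \ref{defCorrectedSeq}. In particular, case (2) of that definition reflects swapping $a_k-N_k-1$ and $b_k-N_k$. The depivoted object $\Lambda^\circ$ is then a profile $(2,0)$ cylindric partition whose entries, read along the zigzag, form the weakly decreasing sequence $y_0\ge y_1\ge\cdots$. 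I define $\mu$ to be the positive terms of $y$; the hypothesis says this is a partition into distinct odd parts. For the weight, I would check $\sum_i y_i+\sum_{k\in S}2k=|\Lambda|$ directly: each pivot $\Pi_k$ removes exactly $2k$ units, namely the $2k$ entries at or above $a_k$.

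\emph{Step 3: inverse and main obstacle.} Given $(\mu,\beta)$, I would recover $S=\{k:2k\in\beta\}$, pad $\mu$ with zeros to get $y_0,\dots,y_{2r}$, and compute $N_k$ from $S$. Then I invert Definition \ref{defCorrectedSeq}. For non-active $k$, set $b_k=y_{2k-1}+N_k$ and $a_k=y_{2k}+N_k$. For active $k$, set $a_k=y_{2k-1}+N_k+1$ and $b_k=y_{2k}+N_k$. Finally set $a_0=y_0+|S|$.

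The main obstacle is checking that the output really has $S(\Lambda)=S$ and satisfies the cylindric inequalities. Here the distinct odd parts hypothesis does the work. For non-active $k$ we need $b_k\ge a_k$, which follows from $y_{2k-1}\ge y_{2k}$. For active $k$ we need $a_k>b_k$, i.e.\ $y_{2k-1}+1>y_{2k}$, which also holds. The delicate case is when both $y$'s vanish beyond $s$: then $a_k=b_k+1$ still holds for active $k$, and the zero padding and $r=\max(\lceil s/2\rceil,\max S)$ must be chosen consistently. For the cross inequalities $b_k\ge a_{k+1}$, the shift $N_k-N_{k+1}=\varepsilon_{k+1}$ has to be absorbed by a strict decrease $y_{2k}>y_{2k+1}$. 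This is where distinctness, and oddness of positive $y$'s (gaps at least $2$), is needed, and I would verify it case by case on the activity of pairs $k$ and $k+1$. Once these inequalities hold, mutual inverseness is immediate from the formulas, as in Step 6 of the previous proof.
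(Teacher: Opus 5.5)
Your bijection is the right one. The map $\Lambda\mapsto(y,S)$ with $\beta=(2k:k\in S)$, inverted by your Step~3 formulas, is exactly what the paper's worked example computes (the paper omits the proof), and the argument goes through. Two supporting claims are wrong, though, and should be fixed. The first is that the pairs do not all become non-active after one copy of each pivot is removed. For $k\in S$ the shape-$(2)$ slice of weight $2k$ occurs at every level $h$ with $b_k<h\le a_k$, i.e.\ with multiplicity $a_k-b_k$. So pair $k$ becomes $[b_k-N_k,\,a_k-N_k-1]$, which is still active when $a_k-b_k\ge2$ (in the paper's example, $[5,10]\mapsto[3,7]$). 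Hence $\Lambda^\circ$ is not pivot-free, and case (2) of Definition~\ref{defCorrectedSeq} merely sorts each pair of $\Lambda^\circ$. This is harmless, because you define $\mu$ from $y$ directly and confirm $\sum_i y_i+\sum_{k\in S}2k=|\Lambda|$ by a direct check. (Incidentally, the levels above $\max(a_1,b_1)$ give weight-$1$ slices of shape $(1)$, so $a_0$ contributes no pivot.)

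The second problem is that your \emph{main obstacle} is misdiagnosed: distinctness and oddness play no role in the cylindric inequalities. In both cases $\min(a_k,b_k)=y_{2k}+N_k$ and $\max(a_k,b_k)=y_{2k-1}+N_k+\varepsilon_k$. Using $N_k=N_{k+1}+\varepsilon_{k+1}$ and $|S|=N_1+\varepsilon_1$, this gives
\[
\min(a_k,b_k)-\max(a_{k+1},b_{k+1})=y_{2k}-y_{2k+1},
\qquad
a_0-\max(a_1,b_1)=y_0-y_1 .
\]
The shift $\varepsilon_{k+1}$ is absorbed by the $+1$ in the active formula, not by a gap in $y$; for consecutive active pairs in the zero region one simply gets $b_k=a_{k+1}$. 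The profile-$(2,0)$ conditions are exactly $a_0\ge\max(a_1,b_1)$ and $\min(a_k,b_k)\ge\max(a_{k+1},b_{k+1})$. Together with $y_{2k-1}-y_{2k}=\max(a_k,b_k)-\min(a_k,b_k)-\varepsilon_k\ge0$, this identity shows that $\Lambda\mapsto(y,S)$ is a weight-preserving bijection from \emph{all} cylindric partitions of profile $(2,0)$ onto pairs (ordinary partition, partition into distinct even parts). This recovers $F_{(2,0)}(1,1,q)=(-q^2;q^2)_\infty/(q;q)_\infty$, and the theorem is just its restriction to $y$ with distinct odd positive terms. You should replace the proposed case-by-case verification with this two-line identity.
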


Since the proof is highly technical, we omit the lengthy details. An illustrative example of the theorem is provided below.

\begin{ex}
{\upshape Let $\mu=(9, 7, 3, 1)$ and $\beta=(8, 6, 2)$. By using the bijection mentioned in Theorem 7 of \cite{K-OS-2023}, the cylindric partition of profile $c=(2,0)$ which corresponds to the pair $(\mu, \beta)$ is

\[\Lambda=\begin{array}{ccccccc}
& & 12 & 10 & 2 & 2 & 1\\
& & 5 & 3 & 1 & & \\
\textcolor{lightgray}{12} & \textcolor{lightgray}{10} & \textcolor{lightgray}{2} & \textcolor{lightgray}{2} & \textcolor{lightgray}{1} & &
\end{array},
\]

Thus $a_0=12$, and the associated pairs are
\[
[b_1,a_1]=[5,10],\qquad
[b_2,a_2]=[3,2],\qquad
[b_3,a_3]=[1,2],\qquad
[b_4,a_4]=[0,1].
\]
Comparing each pair, we find $a_1=10>b_1=5$, $a_2=2<b_2=3$, $a_3=2>b_3=1$, and $a_4=1>b_4=0$. Hence the active set is
\[
S(\Lambda)=\{1,3,4\},\qquad |S(\Lambda)|=3.
\]
For each $k$, $N_k$ counts the active pairs strictly to its right:
\[
N_1=\#\{3,4\}=2,\qquad N_2=\#\{3,4\}=2,\qquad N_3=\#\{4\}=1,\qquad N_4=0.
\]
The corrected term coming from $a_0$ is
\[
y_0=a_0-|S(\Lambda)|=12-3=9.
\]
For the first pair (active, since $a_1>b_1$),
\[
y_1=a_1-N_1-1=10-2-1=7,\qquad y_2=b_1-N_1=5-2=3.
\]
For the second pair (not active, since $b_2\ge a_2$),
\[
y_3=b_2-N_2=3-2=1,\qquad y_4=a_2-N_2=2-2=0.
\]
For the third pair (active, since $a_3>b_3$),
\[
y_5=a_3-N_3-1=2-1-1=0,\qquad y_6=b_3-N_3=1-1=0.
\]
For the fourth pair (active, since $a_4>b_4$),
\[
y_7=a_4-N_4-1=1-0-1=0,\qquad y_8=b_4-N_4=0-0=0.
\]
Hence the corrected sequence is
\[
(y_0,y_1,\ldots,y_8)=(9,7,3,1,0,0,0,0,0).
\]
The positive terms $9>7>3>1>0$ are strictly decreasing and odd, and all remaining terms $y_4,\ldots,y_8$ vanish, so the characterization of Theorem~\ref{c=(2,0)bijection} holds with $s=3$.

Suppose we are given, for a cylindric partition $\Lambda$ of profile $c=(2,0)$ with $r=4$ pairs, the corrected sequence
\[
(y_0,y_1,\ldots,y_8)=(9,7,3,1,0,0,0,0,0),
\]
together with the active set $S(\Lambda)=\{1,3,4\}$ (equivalently, $N_1=2$, $N_2=2$, $N_3=1$, $N_4=0$, and $|S(\Lambda)|=3$). Since $y_0>y_1>y_2>y_3>0$ are strictly decreasing odd numbers and $y_4=\cdots=y_8=0$, the characterization of Theorem~\ref{c=(2,0)bijection} holds with $s=3$. We now invert the defining formulas of the theorem to recover $\Lambda$ itself.

From $y_0=a_0-|S(\Lambda)|$,
\[
a_0=y_0+|S(\Lambda)|=9+3=12.
\]
Pair $1$ is active ($1\in S(\Lambda)$), so $y_1=a_1-N_1-1$ and $y_2=b_1-N_1$ give
\[
a_1=y_1+N_1+1=7+2+1=10,\qquad b_1=y_2+N_1=3+2=5.
\]
Pair $2$ is not active ($2\notin S(\Lambda)$), so $y_3=b_2-N_2$ and $y_4=a_2-N_2$ give
\[
b_2=y_3+N_2=1+2=3,\qquad a_2=y_4+N_2=0+2=2.
\]
Pair $3$ is active, so $y_5=a_3-N_3-1$ and $y_6=b_3-N_3$ give
\[
a_3=y_5+N_3+1=0+1+1=2,\qquad b_3=y_6+N_3=0+1=1.
\]
Pair $4$ is active, so $y_7=a_4-N_4-1$ and $y_8=b_4-N_4$ give
\[
a_4=y_7+N_4+1=0+0+1=1,\qquad b_4=y_8+N_4=0+0=0.
\]

Assembling these values yields
\[
\Lambda=
\begin{array}{ccccccc}
& & 12 & 10 & 2 & 2 & 1\\
& & 5 & 3 & 1 & & \\
\textcolor{lightgray}{12} & \textcolor{lightgray}{10} & \textcolor{lightgray}{2} & \textcolor{lightgray}{2} & \textcolor{lightgray}{1} & &
\end{array},
\]
which one verifies is a valid cylindric partition of profile $c=(2,0)$ (the cross-inequalities between $a_0$ and pair $1$, and between consecutive pairs, all hold), and whose pairs $[b_1,a_1]=[5,10]$, $[b_2,a_2]=[3,2]$, $[b_3,a_3]=[1,2]$, $[b_4,a_4]=[0,1]$ indeed satisfy $a_k>b_k$ exactly for $k\in\{1,3,4\}$, so its active set is $S(\Lambda)=\{1,3,4\}$ as required.

By Theorem~\ref{c=(2,0)bijection}, since the corrected sequence of $\Lambda$ has the stated form, $\Lambda$ corresponds, under the bijection of Theorem~7 in \cite{K-OS-2023}, to the pair $(\mu,\beta)$ with
\[
\mu=(y_0,y_1,y_2,y_3)=(9,7,3,1),\qquad
\beta=(8,6,2).
\]
This is exactly the pair $(\mu,\beta)$ we started with, and $\Lambda$ is exactly the cylindric partition constructed from it earlier: the sequence, together with the active set it presupposes, determines $\Lambda$ uniquely and recovers the correspondence in both directions. }
\end{ex}

\begin{cor}
There is a bijection between the set of cylindric partitions whose characterization is given in Theorem \ref{c=(2,0)bijection} and the set of ordinary partitions into distinct parts.
\end{cor}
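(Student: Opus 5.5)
The plan is to compose the bijection of Theorem~\ref{c=(2,0)bijection} with an elementary bijection on ordinary partitions, mirroring the proof of the corollary to Theorem~\ref{c=(1,1)bijection}. By Theorem~\ref{c=(2,0)bijection}, $\mathcal{CO}_{(2,0)}$ is in one-to-one correspondence with pairs $(\mu,\beta)$, where $\mu$ is a partition into distinct odd parts and $\beta$ is a partition into distinct even parts. The correspondence is weight-preserving, with $|\Lambda|=|\mu|+|\beta|$. The example above confirms this: $47=25+16$.

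I would state weight preservation explicitly. It follows from the pivot-and-slice weight formula of Theorem~4 in \cite{K-OS}, exactly as in the last step of the proof of Theorem~\ref{c=(1,1)bijection}. Alternatively, it can be read off from the corrected sequence. The parts of $\mu$ are the positive $y_i$, and $\beta$ records the active pairs. Each active pair contributes the extra units subtracted in forming the $y_i$, and these total an even pivot weight.

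The second step is a bijection on ordinary partitions, namely $(\mu,\beta)\mapsto \mu\cup\beta$, the multiset union. Since every part of $\mu$ is odd and every part of $\beta$ is even, the two have no common part. So $\mu\cup\beta$ is a partition into distinct parts, and its weight is $|\mu|+|\beta|$. Conversely, any partition into distinct parts splits uniquely into its odd parts and its even parts. These maps are mutually inverse.

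At the level of generating functions this is the identity
\[
(-q;q^2)_\infty(-q^2;q^2)_\infty=(-q;q)_\infty ,
\]
which I would record as a check.

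Composing the two bijections gives a weight-preserving bijection from $\mathcal{CO}_{(2,0)}$ to partitions into distinct parts. The only nontrivial point is weight preservation in Theorem~\ref{c=(2,0)bijection}, whose proof is omitted. If the corollary is meant only as a set bijection, even that is unnecessary: both sets are countable and are graded by weight through these maps, so the composition suffices.
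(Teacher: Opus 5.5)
Your proposal is correct and follows the paper's argument: pass through Theorem~\ref{c=(2,0)bijection} to pairs $(\mu,\beta)$ of distinct odd and distinct even parts, then use $(-q;q^2)_\infty(-q^2;q^2)_\infty=(-q;q)_\infty$; your odd/even splitting of a partition into distinct parts is the bijective form of that identity, and you rightly note that weight preservation is used implicitly even though the theorem does not state it. One slip: the check in the example for Theorem~\ref{c=(2,0)bijection} is $|\Lambda|=27+9=36=20+16=|\mu|+|\beta|$, not $47=25+16$, which mixes in numbers from the profile-$(1,1)$ example.
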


\begin{proof}
The cylindric partitions in Theorem \ref{c=(2,0)bijection} correspond to the pair of partitions $(\mu,\beta)$, where $\mu$ is a partition into distinct odd parts and $\beta$ is a partition into distinct even parts, as stated in the Theorem \ref{c=(2,0)bijection}. The generating function of those pairs of partitions $(\mu,\beta)$ is given by $(-q;q^2)_\infty(-q^2;q^2)_\infty=(-q;q)_\infty$. The generating function on the right-hand side is the generating function of ordinary partitions into distinct parts. This completes the proof.
\end{proof}

\section{Cylindric partitions into distinct parts}
\label{secDist}

The part size count in cylindric parts into distinct parts
comes almost for free~\cite{K-OS}.

\begin{theorem}
\label{thmDistParts}
Let $\mathcal{D}_c(m, n)$ denote the number of cylindric partitions of $n$
with profile $c$ into $m$ distinct parts.
Set
\begin{align}
\nonumber
D_c(u; q) = \sum_{m, n \geq 0} \mathcal{D}_c(m, n) u^m q^n.
\end{align}
Then, there exist a rational function $rg_c(u; q)$ which depends on the profile $c$,
$k \in \mathbb{N}$ and doubly indexed complex numbers
$\alpha_{\cdots}^{\cdots}$, $\beta_{\cdots}^{\cdots}$
such that $D_c(u; q)$ is the following finite linear combination of
infinite products and infinite products multiplied by Lambert series,
up to a correction by the rational function $rg_c(u; q)$.
\begin{align}
\nonumber
D_c(u; q) = rg_c(u; q) + & \sum_{i = 1}^{B_0} \alpha_i^0 \; ( - \beta_i^0 u q ; q)_\infty \\
\nonumber
& + \sum_{i = 1}^{B_1} \alpha_i^1 \;
\left[ \left( z \frac{\mathrm{d}}{\mathrm{d}z} \right)
( - \beta_i^1 u z q ; q)_\infty \right]_{z = 1} \\
\nonumber
& \vdots \\
\nonumber
& + \sum_{i = 1}^{B_k} \alpha_i^k \;
\left[ \left( z \frac{\mathrm{d}}{\mathrm{d}z} \right)^k
( - \beta_i^k u z q ; q)_\infty \right]_{z = 1}
\end{align}
\end{theorem}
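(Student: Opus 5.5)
The plan is to specialize Theorem \ref{thmCrudeGeneral}, or more precisely its refinement \eqref{eqWaysToChoosePivots}, to cylindric partitions into distinct parts. In a cylindric partition into distinct parts, every part size occurs exactly once. In slice language this means each slice in the chain \eqref{ineqSliceChain} appears with multiplicity one, and no slice is empty. Hence $\max(\Lambda)=\partsize(\Lambda)$ equals the number of slices. Conversely, a chain of distinct slices with multiplicity one gives distinct parts only when every part is at distinct heights. This is exactly the analysis of Section 4 in~\cite{K-OS}, where $D_c(1;q)$ was written as a finite combination of products and Lambert-type derivatives. I will rerun that argument while carrying $u$, noting that $u$ simply marks the number of slices.

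\textbf{Step 1: reduction to a finite-state count.} Following~\cite{K-OS}, a distinct-part cylindric partition of profile $c$ corresponds to a strictly increasing chain of slices $E<{}_{1}\Sigma<\cdots<{}_{m}\Sigma$. Each slice is drawn in the path diagram, with the extra constraint that the parts within each row and column are distinct. This constraint translates into local conditions on consecutive slices, through their shapes. The shapes form a finite set depending only on $c$: there are finitely many compositions of $\ell$ into $r$ parts, up to the $\Delta(d,c)$ offsets. Thus a chain is a walk on a finite directed graph whose vertices are the shapes. Each step from shape $\sigma$ to shape $\tau$ adds weight $n\ge \Delta$, and the step carries a factor $u\,q^{n}$. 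Summing each step over admissible $n$ yields transfer entries of the form $u\,q^{a}/(1-q^{t})$ or finite polynomials in $q$.

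\textbf{Step 2: the generating function.} I will write $D_c(u;q)=e_0^{T}(I-uM(q))^{-1}\mathbf{1}$ with a finite matrix $M(q)$. More usefully, I will organize the sum by the weights of the slices, as in~\cite{K-OS}. Ordinary partitions into distinct parts give $(-uq;q)_\infty$, and pivot insertions modify this. Each choice of the pivot pattern, inserted among the distinct parts, contributes a product such as $(-\beta u q;q)_\infty$ times a polynomial in the number of parts. The polynomial arises because the number of positions where a pivot can be inserted is linear in the index. A factor polynomial of degree $k$ in the summation index $j$ of $\sum_j u^j q^{\binom{j+1}{2}}/(q;q)_j$ is realized by $\left(z\frac{\mathrm d}{\mathrm dz}\right)^k(-uzq;q)_\infty$ at $z=1$. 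The finitely many boundary configurations near the empty slice, where $\Delta(d,c)$ forces small exceptional weights, give the rational correction $rg_c(u;q)$.

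\textbf{Main obstacle.} The hardest step will be showing that the number of ways of choosing pivots among $j$ distinct slices is a quasi-polynomial in $j$ of bounded degree $k$, with coefficients producing only finitely many $\beta$'s. The $\beta$'s will be roots of unity or small constants, coming from the periodicity of shapes. I will try to extract this from the finite transfer matrix via partial fractions. Its eigenvalues give the $\beta_i^{\cdot}$, and the Jordan block sizes give the derivative orders, bounded by the size of the shape set. The remaining work is bookkeeping: checking that the $u$-grading passes through unchanged. This holds because every slice, pivot or not, contributes exactly one factor of $u$.
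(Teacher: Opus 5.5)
Your argument breaks at its first step, the translation of ``distinct parts'' into slice language. The number of parts of $\Lambda$ equal to $h$ is $|{}_h\Sigma|-|{}_{h+1}\Sigma|$. So all parts being distinct means that consecutive \emph{distinct} slices differ by exactly one box. That is, they form $E<{}_1\Sigma<\cdots<{}_m\Sigma$ with $|{}_j\Sigma|=j$, each ${}_j\Sigma$ a neighbour of ${}_{j-1}\Sigma$ in the path diagram, and each slice may be repeated arbitrarily often. Multiplicity one characterizes a different family: the cylindric partitions in which every value from $1$ to $\max(\Lambda)$ occurs. For example, $((5),(2))$ of profile $(1,1)$ has distinct parts, yet its slices of weights $1$ and $2$ occur three and two times, and $\max(\Lambda)=5\neq 2=\partsize(\Lambda)$. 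The two families already differ at $u^1$ in profile $(1,1)$: one-part cylindric partitions give $2q/(1-q)$, but those with all parts equal to $1$ give $(2q+q^2)/(1-q^2)$. Hence you never reach the identity on which the paper's proof rests. The first copy of ${}_j\Sigma$ contributes $uq^{j}$, while its further copies contribute $1/(1-q^{j})$ and \emph{no} factor of $u$, contrary to your claim that every slice carries one $u$. This gives $D_c(u;q)=\sum_{m\geq0}a_m u^m q^{\binom{m+1}{2}}/(q;q)_m$, where $a_m$ is the number of length-$m$ paths from $E$ in the path diagram. Your own starting point yields this at once if specialized correctly: take $z=1$ and $\{j_1,\dots,j_m\}=\{1,\dots,m\}$ in \eqref{eqWaysToChoosePivots}, so that $a_m=a_c(1,\dots,m)$.

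The later steps do not recover from this. A finite transfer matrix with rational entries yields $e_0^{T}(I-uM(q))^{-1}\mathbf{1}$, which is rational in $q$ and can never produce $(-\beta uq;q)_\infty$. Also, the polynomial factors have nothing to do with counting positions where pivots can be inserted. What is actually needed is the following. Since $a_m$ counts walks of length $m$, one box per step, on the finite digraph of shapes, we have $a_m=\sum_i p_i(m)\beta_i^m$ for all $m$ beyond a fixed threshold. Here the $\beta_i$ are the nonzero eigenvalues of the adjacency matrix, and $\deg p_i$ is smaller than the corresponding Jordan block size. The finitely many exceptional initial terms give $rg_c(u;q)$. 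Each $m^k\beta^m$ is produced by $\bigl[(z\frac{\mathrm{d}}{\mathrm{d}z})^k(-\beta uzq;q)_\infty\bigr]_{z=1}=\sum_{m}m^k\beta^m u^m q^{\binom{m+1}{2}}/(q;q)_m$. Your closing remark about eigenvalues and Jordan blocks points this way. However, it contradicts your prediction of a quasi-polynomial count with roots of unity as the $\beta$'s: the examples after the theorem have $\beta=(1\pm\sqrt{5})/2$, $2$, and $\pm\sqrt{3}$.
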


\begin{proof}
The proof is  the proof of Theorem 5 in~\cite{K-OS}
with an additional statistic being tracked.
We will only give the sketch of the proof.

For $n \geq 1$,
let $\Lambda$ be a cylindric partition into $(n-1)$ distinct parts.
Call the profile of $\Lambda$ $c$,
and the shape of its largest slice ${}_{n-1}\Sigma$, namely the the bottommost slice, $d$.
The procedure to introduce an $n$th part is as follows.


Pick a slice  ${}_{n}\Sigma$ with weight $n$, profile $c$ and shape $f$. ${}_{n}\Sigma$ must necessarily be a neighbor of  ${}_{n-1}\Sigma$ in the path diagram.
Place ${}_{n}\Sigma$ under ${}_{n-1}\Sigma$,
so each part in $\Lambda$ is increased by one, and a new part 1 is introduced.
This calls for a factor of $uq^n$ in the terms of the generating function
for cylindric partitions into $n$ or more distinct parts.

Then, we can keep adding the same ${}_{n}\Sigma$ under its first occurrence.
Each copy will increment each part of the cylindric partition at hand by 1,
it will add $n$ to the weight,
but none of them will add a new distinct part.
Thus, their contribution to the generating function will be
\begin{align}
\nonumber
1 + q^n + q^{2n} + \cdots = \frac{1}{1 - q^n}.
\end{align}
We clearly have the empty slice for the cylindric partition
into zero distinct parts.
Let's also recall that the distinct  slices
$E$ $< {}_{1}\Sigma$ $< {}_{2}\Sigma$ $< \cdots$ $< {}_{n}\Sigma$
form a path of length $n$ in the path diagram
starting with the empty slice $E$, where $\left\vert {}_{j}\Sigma \right\vert = j$.
Let's call the number of such paths $a_n$.
Mathematical induction yields the term of the generating function
of cylindric partitions into $n$ distinct parts as
\begin{align}
\nonumber
a_n \frac{u^n q^{\binom{n+1}{2}}}{ (q; q)_n },
\end{align}
Thus,
\begin{align}
\label{eqGenFuncDistPartsInter}
D_c(u; q) = \sum_{n\geq0} a_n \frac{u^n q^{\binom{n+1}{2}}}{ (q; q)_n }.
\end{align}
The rest of the proof is computational details,
which can be found in~\cite[Section 4]{K-OS}.
\end{proof}

Examples from~\cite{K-OS} immediately carry over.  
\begin{align}
 D_{(2,0,0)}(u; q) =
 \left( \frac{1}{2} + \frac{\sqrt{5}}{10} \right) 
  \left( - \frac{(1 + \sqrt{5})}{2} u q ; q \right)_\infty
 + \left( \frac{1}{2} - \frac{\sqrt{5}}{10} \right) 
  \left( - \frac{(1 - \sqrt{5})}{2} u q ; q \right)_\infty 
\end{align}
\begin{align*}
	D_{(1,1,1)}(u; q) = -\frac{1}{2} + \frac{3}{2} (- 2 u q; q)_\infty
\end{align*}
\begin{align*}
  D_{(4,0)}(u; q)
  = \frac{1}{3}
  + \left( \frac{2 + \sqrt{3}}{6} \right) ( \sqrt{3} \; u q; q)_\infty
  + \left( \frac{2 - \sqrt{3}}{6} \right) ( - \sqrt{3}\; u q; q)_\infty
\end{align*}

It is possible to use the above proof to write
\begin{align}
\label{eqGenFuncDistPartsMax}
D_c(z, u; q) = \sum_{m, n, b \geq 0} \mathcal{D}_c(b, m, n) z^b u^m q^n
= \sum_{n\geq0} a_n \frac{u^n z^n q^{\binom{n+1}{2}}}{ (zq; q)_n }.
\end{align}
where $\mathcal{D}_c(b, m, n)$ is the number of cylindric partitions
with profile $c$ into $m$ distinct parts and the maximum part is $b$.
The $a_n$'s are the number of paths of length $n$ on the
path diagram associated with profile $c$ starting with the empty slice.
However, we do not have an infinite product representation of \eqref{eqGenFuncDistPartsMax}.

\section{Functional equations}
\label{secFuncEqs}

Let 
\begin{align*}
  f_c(n; u) := f_c(n; u, q) 
  = \sum_{ \substack{ \Lambda \textrm{ with profile } c \\ 
      \mathrm{max}(\Lambda) \leq n} }
    q^{ \vert \Lambda \vert } u^{ \partsize(\Lambda) }.  
\end{align*}
In other words, 
$f_c(n; u)$ generates cylindric partitions with profile $c$ 
whose parts are at most $n$.  
When we want to incorporate $z^{\mathrm{max}(\Lambda)}$, 
we use 
\begin{align*}
  f_c(n; u) - f_c(n-1; u)
  = \sum_{ \substack{ \Lambda \textrm{ with profile } c \\ 
      \mathrm{max}(\Lambda) = n} }
    q^{ \vert \Lambda \vert } u^{ \partsize(\Lambda) }.  
\end{align*}
Then, we can multply either side by $z^n$ 
and write various other generating functions 
along with their functional equations, 
as we will be shown below.  
It follows that $f_c(0; u) = 1$, 
and it is understood that $f_c(n; u) = 0$ for $n < 0$.  

\begin{theorem}
\label{thmFuncEqs}
For an arbitrary but fixed profile $c = (c_1, c_2, \ldots, c_r)$
with  rank $r$ and level $\ell = c_1 + c_2 + \cdots + c_r$,
\begin{align}
\label{eqFuncEqInitVals}
f_c(0; u) = 1,
\end{align}
and for $n \geq 1$
\begin{align}
\label{eqFuncEq}
f_c(n; u) = f_c(n-1; u) + \frac{ u q^{nr} }{ ( 1 - q^{nr} ) } f_c(n-1; u)
+ \sum_{d \neq c} \frac{ u q^{ n \Delta(c,d) } }{ ( 1 - q^{nr} ) } f_d(n-1; u),
\end{align}
where the sum is over all profiles $d$ except for $c$
with rank $r$ and level $\ell$.
\end{theorem}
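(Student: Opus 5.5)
We classify the cylindric partitions counted by $f_c(n;u)$ according to whether $n$ occurs as a part. Those with $\mathrm{max}(\Lambda)\le n-1$ give the term $f_c(n-1;u)$. The remaining ones, with $\mathrm{max}(\Lambda)=n$, we decompose by removing the parts equal to $n$. Concretely, in a cylindric partition of profile $c$ with largest part $n$, the entries equal to $n$ form an initial segment in each row $\lambda^{(i)}$, say of length $m_i\ge 0$, and at least one $m_i>0$.

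The first step is to show that deleting these entries leaves a cylindric partition $\Lambda'$ with all parts $\le n-1$ and with a new profile $d$, determined by $c$ and $(m_1,\dots,m_r)$. Shifting row $i$ left by $m_i$ changes the offset between rows $i$ and $i+1$ from $c_{i+1}$ to $c_{i+1}+m_{i+1}-m_i$. Hence $d_{i+1}=c_{i+1}+m_{i+1}-m_i$, so $d$ has the same rank $r$ and level $\ell$. The cylindric inequalities on $\Lambda$ force $d_{i}\ge 0$, and conversely any $\Lambda'$ of profile $d$ with parts $\le n-1$, together with admissible $(m_i)$, reassembles to some $\Lambda$. The count $\partsize$ goes up by exactly one, because $n$ is a new value not present in $\Lambda'$. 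This accounts for the factor $u$. The weight contributed is $n\sum_i m_i$.

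The second step is to enumerate, for a fixed target profile $d$, the vectors $(m_i)$ producing $d$. The differences $m_{i+1}-m_i=d_{i+1}-c_{i+1}$ are fixed, so the solutions form a one-parameter family $m_i=m_i^{0}+k$, $k\ge 0$, where $m^0$ is the minimal nonnegative solution. Increasing every $m_i$ by one adds $nr$ to the weight. Summing over $k$ gives the geometric factor $1/(1-q^{nr})$. For $d=c$ the minimal solution is $m=0$, which must be excluded because $n$ has to appear, so the sum starts at $k=1$ and gives $uq^{nr}/(1-q^{nr})$ times $f_c(n-1;u)$.

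For $d\ne c$ the minimal solution is already nonzero, with $\sum_i m_i^0$ equal to the smallest total number of boxes in a slice of shape corresponding to $d$ on the path diagram of $c$. We will identify this with $\Delta$ as defined at the end of Section \ref{secPrelim}. Summing over all $d$ then gives \eqref{eqFuncEq}, and \eqref{eqFuncEqInitVals} is immediate.

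The main obstacle is this identification of $\sum_i m_i^0$ with $\Delta$, and it includes checking the orientation of the arguments. The statement has $\Delta(c,d)$, whereas the paper's definition uses $\Delta(d,c)$ for the slice $n^d$ in the diagram of profile $c$; the paper also mentions correcting equation (24) of \cite{K-OS}, so this is exactly where care is needed. To resolve it, I would read the removed entries as $n$ copies of one slice ${}_n\Sigma$: a slice with profile $c$ whose row lengths are $m_i$, and whose right ends, after the shift, encode the relationship between the two profiles. I would then match it with the left/right-end description of shapes, and test both orientations on the small cases $c=(1,1)$ and $c=(2,0)$ before fixing the convention. The ``shape'' of a slice is defined through its right ends, while the new profile records left ends, so a direct check of the convention on these examples is unavoidable.
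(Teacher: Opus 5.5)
Your decomposition is the one the paper uses. The entries equal to $n$ form the top slice ${}_n\Sigma$. Stripping them leaves a cylindric partition of some profile $d$ with parts $\le n-1$. For fixed $d$ the admissible top layers are a minimal one plus $k\ge 0$ rounds of one box per row, which gives $1/(1-q^{nr})$, and for $d=c$ gives $uq^{nr}/(1-q^{nr})$.

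The gap is the step you postpone, and as you have set it up it would go wrong.

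First, your profile change has the wrong sign. From $\lambda^{\prime(i)}_j=\lambda^{(i)}_{j+m_i}$ and $\lambda^{(i)}_{j+m_i}\ge\lambda^{(i+1)}_{j+m_i+c_{i+1}}$ one gets $d_{i+1}=c_{i+1}+m_i-m_{i+1}$, with indices mod $r$. Only with this sign is $d_{i+1}\ge0$ equivalent to $m_{i+1}\le m_i+c_{i+1}$. That is exactly the cylindric inequality across the boundary between the $n$'s and the smaller entries. So only with this sign do your claim that the inequalities force $d_i\ge 0$, and the converse you need for reassembly, actually hold. With your sign, $c=(2,0)$, $m=(1,0)$ is a legal top layer but gives $d_2=-1$. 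Likewise, excising the single $13$ from the running example $((12,6,2),(10,6,2),(13,10,5,2))$ would give $(0,0,3)$ instead of the true $(2,0,1)$.

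Second, once the sign is fixed, the identification is not an obstacle; it is the definition. The top slice has outer partition $\gamma_0+m$ and weight $\sum_i m_i$. Its right ends are the left ends of $\Lambda'$, so its shape is the one encoding $d$. As $k\ge0$ varies, the layers $m^0+k\mathbf{1}$ run through all slices of profile $c$ with that shape. Hence $\sum_i m^0_i=\Delta(d,c)$, with shape first and profile second. This is what the paper's proof argues (``at least $\Delta(d,c)$ $n$'s'', ``the smallest slice with profile $c$ and shape $d$''), and it is what \eqref{eqIntermFuncEq} uses.

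Your plan for settling the orientation cannot work. With your sign you would solve $m_{i+1}-m_i=d_{i+1}-c_{i+1}$, which is the system for excising from profile $d$ down to profile $c$. Its minimal sum is $\Delta(c,d)=r\max_i m^0_i-\Delta(d,c)$. So your computation would land on the printed $\Delta(c,d)$ and seem to confirm it.

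Rank-two checks cannot catch this: for $r=2$ both quantities equal $|c_2-d_2|$, so testing $(1,1)$ and $(2,0)$ distinguishes nothing. In rank $3$ they differ. For $c=(1,0,2)$ and $d=(2,0,1)$ one has $\Delta(d,c)=1$ but $\Delta(c,d)=2$. The running example excises a single $13$, contributing $q^{13}$, which matches $\Delta(d,c)$.

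In fact the displayed identity is false as printed. For $c=(1,0,2)$ and $n=1$ there are exactly three slices of weight $4$, with row lengths $(2,1,1)$, $(1,1,2)$ and $(2,0,2)$. The version with $q^{n\Delta(d,c)}$ gives this count of three, while the version with $q^{n\Delta(c,d)}$ gives four. What you should prove is \eqref{eqFuncEq} with $\Delta(d,c)$ in place of $\Delta(c,d)$, using the corrected sign and the slice identification above.
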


\begin{proof}
This proof is an alternative point of view of the construction
in Section 4 of~\cite{K-OS}
with an additional statistic being tracked,
namely the part size count.

We consider a cylindric partition $\Lambda$
with an arbitrary but fixed profile $d$ with parts at most $n-1$.
$\Lambda$ is generated by $f_d(n-1; u)$.
If there are no parts equal to $n-1$ in $\Lambda$,
this means it contains one or more empty slices $E$ with profile $d$.
We would like to obtain a cylindric partition $M$
with profile $c$ with parts at most $n$.
There are two cases to consider.

If $d \neq c$, then we need at least $\Delta(d,c)$ $n$'s to place at the left end of $\Lambda$.
In other words, we append $\Delta(d, c)$ boxes to the left of all slices of $\Lambda$,
including the empty slices to endow them with profile $c$.
On top of it, we place a slice,
the smallest slice with profile $c$ and shape $d$.
The last added slice has weight $\Delta(d,c)$ by definition.
The introduction of a new part $n$ requires an additional factor of $u$,
because $\Lambda$ could not have had any $n$'s.
We have obtained a $M$ as described in the above paragraph,
and, we have the account of the factor $u q^{ n \Delta(c,d) }  f_d(n-1; u)$
in the sum in \eqref{eqFuncEq}.

We can either stop here,
or we can add one box on the left of each row of each slice of $M$,
adding $rn$ boxes in total.
Since $M$ already had $n$'s,
this addition does not introduce a new part.
So, no need for an additional factor of $u$.
We can repeat it as many times as we want,
explaining the $\frac{ 1 }{ (1 - q^{rn}) }$ in the sum at the end of \eqref{eqFuncEq}.
Altogether, we have the part of the proof
pertaining to the sum over profiles $d$ that are different from $c$.

If $d = c$, we first take $M$ as $\Lambda$
with an additional empty slice $E$ with profile $c$.
Because the empty slice does not alter the weight or the distinct part count,
this is practically taking $\Lambda$ generated by $f_c(n-1; u)$,
and regarding it as $M$ generated by $f_c(n; u)$.
This explains the first term on the right hand side of \eqref{eqFuncEq}.

Then, we can repeat the adding boxes,
one to the left end of each row of each slice in $M$,
including its initially empty slices,
a total of $rn$ boxes in each round.
The first round introduces the new part $n$,
so we need an extra factor of $u$.
The subsequent rounds do not add a new part,
so we do not need any more factors of $u$.
We now have the explanation of the factor
\begin{align}
\nonumber
u q^{rn} + u q^{2rn} + \cdots = \frac{ u q^{nr} }{ ( 1 - q^{nr} ) }
\end{align}
in the second term on the right hand side of \eqref{eqFuncEq},
and conclude the proof.
\end{proof}

{\bf Example: }
The cylindric partition 
\begin{align*}
  \begin{array}{ccc ccc ccc}
  & & & 12 & 6 & 2 &\\
  & & & 10 & 6 & 2 & \\
  & 13 & 10 & 5 & 2 & \\
  \textcolor{lightgray}{12} & \textcolor{lightgray}{6}
  & \textcolor{lightgray}{2} & 
  \end{array}
\end{align*}
is generated by $f_{(1, 0, 2)}(13, u)$.  
If we excise the squares containing the largest part 13, 
we end up with 
\begin{align*}
  \begin{array}{ccc ccc ccc}
  & & & 12 & 6 & 2 &\\
  & & & 10 & 6 & 2 & \\
  & & 10 & 5 & 2 & \\
  \textcolor{lightgray}{12} & \textcolor{lightgray}{6}
  & \textcolor{lightgray}{2} & 
  \end{array}, 
\end{align*}
a cylindric partition generated by $f_{(2, 0, 1)}(12, u)$.  
This excision is covered by the term 
\begin{align*}
  \frac{ u q^{ 13 } }{ ( 1 - q^{3 \cdot 13} ) } f_{(2, 0, 1)}(12; u).  
\end{align*}
Moreover, the denominator did not contribute this time.  
Excision of the current largest part 12 
will similarly take us through the term 
\begin{align*}
  \frac{ u q^{ 12 } }{ ( 1 - q^{12 \cdot 3} ) } f_{(1, 1, 1)}(11; u), 
\end{align*}
again with no contribution from the denominator, and then 
through the term 
\begin{align*}
  f_{(1, 1, 1)}(10; u), 
\end{align*}
because the cylindric partition 
\begin{align*}
  \begin{array}{cc ccc ccc}
  & & & 6 & 2 &\\
  & & 10 & 6 & 2 & \\
  & 10 & 5 & 2 & \\
  \textcolor{lightgray}{6}
  & \textcolor{lightgray}{2} & 
  \end{array} 
\end{align*}
is generated both by $f_{(1, 1, 1)}(11; u)$ and by $f_{(1, 1, 1)}(10; u)$.  
In fact, it is generated by $f_{(1, 1, 1)}(n; u)$ for $n \geq 10$.  
In the passage to $f_{(1, 1, 1)}(10; u)$, 
no factors of $u$ are encountered, 
because the part size count is preserved.  
After similar steps, we come by the cylindric partition 
\begin{align*}
  \begin{array}{cc cc}
  & & & 2 \\
  & & & 2 \\
  & & 2 & \\
  \textcolor{lightgray}{2} & & & 
  \end{array} 
\end{align*}
generated by $f_{(2, 0, 1)}(2; u)$.  
Excising the squares with the largest part 2 this time 
yields the empty cylindric partition with the same profile, 
and this is realized through the term 
\begin{align*}
  \frac{ u q^{3 \cdot 2} }{ ( 1 - q^{ 3 \cdot 2} ) } f_{2, 0, 1}(1; u)
\end{align*}
using the contribution $q^6$ from the denominator.

We will finally sketch some formal derivations.  
The reason for the verbosity is to correct the inaccuracy 
in equation (18) in~\cite{K-OS}.  
Let us keep in mind that $c$ and $d$ are compositions with $r$ parts, 
i.e. we are dealing with cylindric partitions 
with arbitrary but fixed rank $r$ throughout.  
The finite sums below are over all compositions $d$ 
with weight $\ell$ except $c$.  
Set
\begin{align*}
  P_c(n; u) = (q^r; q^r)_n \; f_c(n; u).  
\end{align*}
It is not straightforward, but possible 
to have a combinatorial interpretation of $P_c(n; u)$.  
We are not going to attempt it here.  
Then, \eqref{eqFuncEq} translates to 
\begin{align}
\label{eqIntermFuncEq}
  P_c(n; u) = \left( 1 - (1 - u)q^{nr} \right) P_c(n-1; u)
    + u \sum_{d \neq c} q^{ n \Delta(d, c) } P_d(n-1; u).  
\end{align}
Now define 
\begin{align*}
  G_c(u, z) = G_c(u, z; q) = \sum_{n \geq 0} \frac{ P_c(n; u) z^n }{ (q^r; q^r)_n }.  
\end{align*}
This time, \eqref{eqIntermFuncEq} translates to 
\begin{align}
\label{eqFuncEqAlmostThere}
  (1 - z)G_c(u, z) = \left( 1 - (1 - u)z q^r \right) G_c(u, z q^r) 
    + u z \sum_{ d \neq c } q^{ \Delta( d, c ) } G_d( u, z q^{ \Delta(d, c) } ).  
\end{align}
Since 
\begin{align*}
  G_c(u, z) = \sum_{ n \geq 0 } f_c(n; u) z^n, 
\end{align*}
\begin{align*}
  F_c(u, z) = \sum_{ n \geq 0 } (f_c(n; u) - f_c(n-1; u) ) z^n, 
\end{align*}
and hence 
\begin{align}
\label{eqFuncEqUltimate}
  F_c(u, z) = \frac{\left( 1 - (1 - u)z q^r \right)}{( 1 - z q^r )} F_c(u, z q^r) 
    + u z \sum_{ d \neq c } \frac{q^{ \Delta( d, c ) }}{ 1 - z q^{ \Delta( d, c ) } } 
      F_d( u, z q^{ \Delta(d, c) } ).  
\end{align}
Setting $u = 1$ corrects equation (18) in ~\cite{K-OS}.

\section{Conclusion and future work}
\label{secConc}

The concept of part sizes in integer partitions 
goes back to MacMahon~\cite{MacMahon}.  
One quest is to investigate the possibility to find formulas 
for suitably restricted cylindric partitions \emph{with designated pivots} 
similar to those in~\cite{ALL}.  

An obvious direction for future research is 
looking for bijections involving cylindric partitions with larger profiles.  
Also, one would like to see Theorem \ref{c=(2,0)bijection}
with a simpler and more natural statement.

It is possible to express $a_{c}(j_1, j_2, \ldots, j_m)$ 
described towards the end of Section \ref{secGeneral} 
for larger profiles in terms of recurrences, 
but the recurrences get messier as the rank $r$ or the level $\ell$ increases.  
An open question is the study of $a_{c}(j_1, j_2, \ldots, j_m)$ 
for arbitrary but fixed $c$.

%

\bibliographystyle{amsplain}

\end{document}